\documentclass[reqno,12pt,a4paper]{amsart}
\usepackage[T1]{fontenc}
\usepackage[utf8]{inputenc}
\usepackage[english]{babel}
\usepackage{amsmath,amssymb,amsthm}
\usepackage{bm}
\usepackage[margin=25mm]{geometry}
\usepackage[hidelinks]{hyperref}
\newcommand{\cN}{\mathbb N}
\newcommand{\cZ}{\mathbb Z}
\newcommand{\PP}{\mathbb P}
\newcommand{\cA}{\mathcal A}
\newcommand{\cB}{\mathcal B}

\theoremstyle{plain}
\newtheorem{theorem}{Theorem}[section]
\newtheorem{lemma}{Lemma}[section]

\theoremstyle{definition}

\theoremstyle{remark}
\newtheorem{remark}{Remark}
\numberwithin{equation}{section}

\newcommand{\cR}{\mathcal R}
\newcommand{\E}{\mathbb E}
\newcommand{\ind}{\mathbf 1}
\allowdisplaybreaks[2]

\title[Additive complements and Romanoff gaps]
{Additive complements to the primes and large gaps between Romanoff numbers}
\author{Artyom Radomskii}
\address{HSE University, Moscow, Russian Federation}
\keywords{Prime numbers, additive complements, Romanoff numbers,
large gaps, sieve methods}
\subjclass[2020]{Primary 11P32; Secondary 11B05, 11B13, 11N36}

\email{artyom.radomskii@mail.ru}
\date{}

\begin{document}
\begin{abstract}
We prove that every set $\mathcal{B}\subseteq\mathbb{Z}_{\ge0}$ for which
$\mathbb{P}+\mathcal{B}$ contains all sufficiently large integers satisfies
$B(X)\gg\log X\log\log X$, where
$B(X)=\#(\mathcal{B}\cap[0,X])$ and the implied constant is absolute.
This proves a quantitative form of Ruzsa's Conjecture~2 and gives a
negative answer to the $O(\log X)$ question in Erd\H{o}s Problem~32.
The bound follows from a more general result: for an arbitrary set
$\mathcal{B}\subseteq\mathbb{Z}_{\ge0}$ and sufficiently large $X$, we
construct a block of
$\lfloor c\log X\log\log X/(B(X)+1)\rfloor$ consecutive integers
in $[X/2,X]$ outside $\mathbb{P}+\mathcal{B}$, provided this length is positive.
As an application, we obtain $G_{\mathcal R}(X)\gg\log\log X$ for the
longest block in $[1,X]$ containing no Romanoff number $p+2^n$,
improving the bound
$G_{\mathcal R}(X)\gg\log\log X/\log\log\log X$ of Kalmynin and Konyagin.
The argument also applies to $p+a^n$ for every fixed integer $a\ge2$.
The key step is a lemma on large shifts, proved using truncated products
of divisor sums and the removal of auxiliary primes dividing differences
of shifts.
\end{abstract}
\maketitle

\section{Introduction}\label{sec:introduction}

Let $\PP$ denote the set of primes and let $\cN=\{1,2,\ldots\}$.
For a set $\cB\subseteq\cZ_{\ge0}$, put
\[
 B(X)=\#(\cB\cap[0,X]),\qquad
 \PP+\cB=\{p+a:p\in\PP,\ a\in\cB\}.
\]
We call $\cB$ an \emph{asymptotic additive complement to the primes}
if there is an integer $n_0\ge1$ such that every integer $n\ge n_0$
belongs to $\PP+\cB$. We are interested in lower bounds for $B(X)$.

Erd\H{o}s constructed such a set with $B(X)=O((\log X)^2)$;
see~\cite{ErdosProblem32,Ruzsa} and the
references therein. Problem~32 in the Erd\H{o}s Problems
collection~\cite{ErdosProblem32} asks whether this can be improved
to $B(X)=o((\log X)^2)$, or even to $B(X)=O(\log X)$.
A further question is whether every such complement satisfies
\[
 \liminf_{X\to\infty}\frac{B(X)}{\log X}>1.
\]
Ruzsa~\cite[Theorem~2]{Ruzsa} answered this last question by proving
the stronger estimate
\[
 \liminf_{X\to\infty}\frac{B(X)}{\log X}\ge e^{\gamma_E},
\]
where $\gamma_E$ is Euler's constant.

Ruzsa further conjectured~\cite[Conjecture~2, p.~270]{Ruzsa} that
every asymptotic additive complement satisfies
\begin{equation}\label{eq:ruzsa}
 \frac{B(X)}{\log X}\longrightarrow\infty
 \qquad(X\to\infty).
\end{equation}
We prove a quantitative form of \eqref{eq:ruzsa} using
Lemma~\ref{lem:translate}. The main result gives a block of
consecutive integers outside $\PP+\cB$ for an arbitrary set $\cB$.
\begin{theorem}[A block outside a prime sumset]\label{thm:block}
There exist absolute constants $c_*>0$ and $X_*>0$ with the following
property. For every set $\cB\subseteq\cZ_{\ge0}$ and every real
$X\ge X_*$, put
\begin{equation}\label{eq:H}
 H=\left\lfloor c_*\frac{\log X\log\log X}{B(X)+1}\right\rfloor.
\end{equation}
If $H\ge1$, there is an integer $N$ such that
\[
 \{N+1,\ldots,N+H\}\subseteq[X/2,X]
\]
and
\[
 \{N+1,\ldots,N+H\}\cap(\PP+\cB)=\varnothing.
\]
Both constants are independent of $\cB$.
\end{theorem}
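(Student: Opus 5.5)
We will argue by a Chinese Remainder Theorem (covering) construction, sieving one residue class per small prime. Put $K=B(X)+1$ and write $\cB\cap[0,X]=\{b_1,\dots,b_{K-1}\}$; only these shifts matter, since an integer $n\le X$ lies in $\PP+\cB$ only via some $b\le X$. We want an integer $N$ such that for every $h\in\{1,\dots,H\}$ and every $b_j$, the number $N+h-b_j$ is either $\le1$ or composite. The plan is to assign to each pair $(h,j)$ a prime $p$ with $p\mid N+h-b_j$ and $p<N+h-b_j$. We take primes $p\le y$, with $y=c\log X$ for a small constant $c$, so that $\prod_{p\le y}p\le X^{1/3}$ and $N$ can be chosen by the CRT in $[X/2,X-H]$. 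Pairs with $N+h-b_j$ small (i.e.\ $b_j$ close to $N$) are handled separately: such values are at most of size $y$, and a short interval near $N$ meets only $O(1)$ of the $b_j$ after a suitable choice of $N$. Alternatively, we choose $N$ so that none of the $b_j$ lie in $[N-y^2,N+H]$, which is possible by pigeonhole because $K\ll \log X$ whenever $H\ge1$.

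The core task is a combinatorial covering problem. We have $HK$ targets $m=h-b_j$ (integers mod each prime), and for each prime $p\le y$ we choose a residue $a_p$ and kill every target with $m\equiv -a_p\pmod p$. If the $b_j$ were generic, a random choice of residues kills a proportion $1/p$ per prime, which only gives $\prod(1-1/p)\asymp 1/\log y$ surviving. This is the Ruzsa-type $\log X$ barrier. The gain of $\log\log X$ must come from a greedy choice: for each prime $p$ we pick the residue class containing the most surviving targets, which kills at least a $1/p$ fraction of the survivors. For the large primes $p\in(y/2,y]$ we do better. There are $\gg y/\log y$ such primes and only $HK$ surviving targets, so once the survivors number fewer than about $y/\log y$, each survivor gets its own prime. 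Thus it suffices to reduce the survivors to $\le c\,y/\log y$ using primes $p\le y/2$. Greedy sieving (Erdős–Rankin style: first use primes in $(z,y/2]$ with a smooth-number-type structure, then small primes) gives survivors $\approx HK\prod_{p\le y/2}(1-1/p)$, improved by the Rankin trick. This is exactly where the input from Lemma~\ref{lem:translate} enters: it should supply a large shift $t$ such that the translated targets $t+h-b_j$ have few survivors after sieving by small primes. This is the bound via truncated products of divisor sums that the abstract mentions. I expect it to yield survivors $\ll HK/(\log y\cdot\log\log X)$-ish in aggregate, or equivalently to allow $HK\asymp y\log\log X/\log y\cdot\log y$. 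Matching the target $H\asymp \log X\log\log X/K$ means the final count must be $\ll y/\log y$ with $HK\asymp y\log\log y$. So the lemma needs to save a factor $\log y\cdot\log\log y$ over trivial, the extra $\log\log$ coming from Rankin-style smooth-number savings.

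The steps, in order, are as follows.
\begin{enumerate}
\item Reduce to the finite shift set and fix $y$.
\item Apply Lemma~\ref{lem:translate} to the set $\{h-b_j\}$ to obtain residues $a_p$ for $p\le y/2$, leaving at most $c'y/\log y$ survivors. This step is also where the auxiliary primes dividing differences $b_i-b_j$ are discarded so that distinct survivors stay distinct modulo the remaining primes.
\item Cover each survivor with a distinct prime in $(y/2,y]$.
\item Combine by the CRT, and check that each $N+h-b_j$ exceeds its assigned prime, which holds since $N\ge X/2$ apart from the excluded small differences.
\end{enumerate}
The main obstacle is step 2, the quantitative sieve saving uniformly over arbitrary shift sets; everything else is routine bookkeeping with the CRT and the prime number theorem.
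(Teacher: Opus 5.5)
\textbf{The gap is your Step~2, and the framework around it cannot produce the $\log\log X$ gain as set up.} You use only primes $p\le y=c\log X$, so that the CRT modulus stays $\le X^{1/3}$, and in the last stage each prime in $(y/2,y]$ removes one target. Step~2 must therefore bring $HK$ targets down to $\ll y/\log y$.

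Your bookkeeping is off here. Since $\log\log X\asymp\log y$, the target $HK\asymp\log X\log\log X$ is $\asymp y\log y$, not $y\log\log y$. So Step~2 must save a factor $(\log y)^2$. One residue class per prime $p\le y/2$, chosen greedily or at random, guarantees only the saving $\prod_{p\le y/2}(1-1/p)^{-1}\asymp\log y$. Your scheme as written thus yields $HK\ll\log X$, which is Ruzsa's barrier, and the missing factor $\log\log X$ is never supplied.

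Neither of the sources you point to can supply it:
\begin{itemize}
\item Rankin's smooth-number trick (residue $0$ for the medium primes) works because the targets form an interval. Here they are $K$ arbitrarily placed blocks of length $H$, which are single points when $H=1$.
\item Lemma~\ref{lem:translate} does not produce residues $a_p$ for small primes and does not thin out survivors. It takes as input at most $\eta x$ distinct shifts and returns a single $t\le e^x$ covering all of them. It cannot be applied to the full set $\{h-b_j\}$, which has size $\asymp\log X\log\log X$.
\item Discarding primes that divide differences of shifts is feasible only for primes $>x^6$. There $\log\Delta=O(x^3)$, with $\Delta=\prod_{i<j}|s_i-s_j|$, so the discarded primes contribute $\sum 1/p\ll x^{-3}$. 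For $p\le c\log X$ it is impossible, since among more than $p$ targets two are congruent modulo $p$.
\end{itemize}

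\textbf{What the paper does instead.} It puts the large primes at the end, beyond the reach of the CRT.
\begin{enumerate}
\item With $x=\frac13\log X$ and $Q=\prod_{p\le x}p$, it picks $r\bmod Q$ by plain averaging. At most $HM\varphi(Q)/Q\le(3c_*e^{-\gamma_E}+o(1))x\le\eta x$ shifts $j-a$ survive.
\item Lemma~\ref{lem:translate} then covers all these survivors at once by primes in $(x^6,\exp(x/(\log x)^5)]$. These primes go far beyond $\log X$, and their product vastly exceeds $X$, so they are never combined by the CRT. The translate $N=b+Qt$, $1\le t\le e^x$, is found by showing that the sieve weight $w(t)=R(t)^2$ charges uncovered forms with relative mass $o(1)$. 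The displacement $Qt\le X^{2/3+o(1)}$ keeps the block inside the middle third of a $\cB$-free window of length $\ge X/(2(M+1))$.
\end{enumerate}
The $\log\log X$ gain is exactly that the final stage absorbs $\asymp\log X$ arbitrary survivors, rather than the $\asymp\log X/\log\log X$ your one-prime-per-survivor stage can handle.

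\textbf{What does work.} Your treatment of positivity and properness, placing $N$ in a $\cB$-free window, matches the paper. One correction there: $H\ge1$ gives only $K\ll\log X\log\log X$, not $K\ll\log X$, but the pigeonhole argument still goes through.
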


\begin{theorem}[Asymptotic additive complements]\label{cor:complement}
There is an absolute constant $c>0$ such that every asymptotic
additive complement $\cB\subseteq\cZ_{\ge0}$ to the primes satisfies
\begin{equation}\label{eq:lower}
 B(X)\ge c\log X\log\log X
 \qquad(X\ge X_0(\cB)).
\end{equation}
In particular, \eqref{eq:ruzsa} holds.
\end{theorem}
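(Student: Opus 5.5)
The plan is to derive Theorem~\ref{cor:complement} from Theorem~\ref{thm:block} by contradiction. The constants are $c_*$ and $X_*$ from Theorem~\ref{thm:block}, and we take $c = c_*/2$, or any fixed constant below $c_*$. Let $\cB$ be an asymptotic additive complement with threshold $n_0$, and suppose that for some large $X$ we have $B(X) < c\log X\log\log X$. We want to show that $X$ is bounded in terms of $\cB$; this yields \eqref{eq:lower} for all $X \ge X_0(\cB)$.

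\emph{Case $\cB$ nonempty.} Given the assumed inequality, we have
\[
 \frac{c_*\log X\log\log X}{B(X)+1} > \frac{c_*\log X\log\log X}{c\log X\log\log X+1}.
\]
For large $X$ the right-hand side exceeds $c_*/(2c)\cdot(1-o(1))$, which is close to $1$. To get $H\ge1$ cleanly, I would instead choose $c=c_*/3$. Then the right-hand side is at least $3/(1+o(1)) > 1$ once $X$ is large, so $H\ge1$. The threshold here is absolute, since it depends only on $\log X\log\log X \to\infty$.

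Now take $X\ge \max(X_*, 2n_0, X_1)$, where $X_1$ is the absolute threshold just mentioned. Theorem~\ref{thm:block} produces an integer $N$ with $N+1\in[X/2,X]$ and $N+1\notin\PP+\cB$. But $N+1\ge X/2\ge n_0$, so $N+1\in\PP+\cB$ by the complement property. This is a contradiction. Hence $B(X)\ge c\log X\log\log X$ for all $X\ge X_0(\cB):=\max(X_*,2n_0,X_1)$.

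\emph{Consequence \eqref{eq:ruzsa}.} Dividing \eqref{eq:lower} by $\log X$ gives $B(X)/\log X\ge c\log\log X\to\infty$.

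There is no real obstacle here; all the depth lies in Theorem~\ref{thm:block}. The only points needing care are the choice of $c$ strictly smaller than $c_*$, so that the floor in \eqref{eq:H} is at least $1$ despite the $+1$ in the denominator, and making $X$ large enough that $X/2\ge n_0$.
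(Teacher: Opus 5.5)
Your proof is correct and follows the paper's argument: assume $B(X)$ is small, check that the block length $H$ in \eqref{eq:H} is at least $1$, and contradict the complement property via $X/2\ge n_0$. The switch to $c=c_*/3$ is unnecessary, since with $c=c_*/2$ the quotient $c_*\log X\log\log X/(B(X)+1)$ is at least $2/(1+o(1))$, not merely about $1$; the paper in fact takes $c=c_*/2$, by first proving $B(X)+1>c_*\log X\log\log X$ and then absorbing the $+1$.
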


Theorem~\ref{cor:complement} rules out $B(X)=O(\log X)$,
answering this part of Problem~32 in~\cite{ErdosProblem32}.
The estimate \eqref{eq:lower} leaves open the question whether
$B(X)=o((\log X)^2)$ is possible. The assumption that $\PP+\cB$
contains every sufficiently large integer is essential here; the
argument does not establish Ruzsa's Conjecture~1 in~\cite{Ruzsa},
where $\PP+\cB$ is only assumed to have asymptotic density~$1$.

Theorem~\ref{thm:block} also has a natural application to gaps between Romanoff numbers.
We therefore consider the set
\[
 \cR=\{p+2^n:p\in\PP,\ n\in\cN\}.
\]
For $X\ge1$, define
\[
 \begin{aligned}
 G_{\cR}(X)=\max\bigl(&\{0\}\cup
 \{H\in\cN:\ \exists N\in\cZ_{\ge0},\ N+H\le X,\\
 &\hspace{33mm}\{N+1,\ldots,N+H\}\cap\cR=\varnothing\}\bigr).
 \end{aligned}
\]

The study of representations involving
primes and powers of two goes back to Euler and Goldbach; see Euler's
letter to Goldbach dated 16 December 1752~\cite{EulerGoldbach}.
De Polignac's paper of 1849~\cite[Th\'eor\`eme~2, IV]{Polignac} is
another early contribution to this subject. In 1934, Romanoff~\cite{Romanoff}
proved that $\cR$ has positive lower density. Romanoff's theorem has since been generalized in several directions. Rieger~\cite{Rieger} extended it to algebraic number
fields. Shparlinski and Weingartner~\cite{ShparlinskiWeingartner}
proved an explicit analogue over finite fields, estimating the
proportion of polynomials representable as $h+g^k$, where $h$ is
irreducible and $g$ is a fixed polynomial of positive degree. Radomskii~\cite[Theorem~1.1]{Radomskii} established a general criterion
for many integers to have many representations $a_i+b_j$.
It combines counting and pair-correlation estimates for
$\cA=\{a_i\}$ with an averaged congruence condition for
$\cB=\{b_j\}$, explicitly allowing repeated terms in $\cB$.
Applications include prime summands and sums of two squares,
combined with polynomial values, restricted powers, and
elliptic-curve point counts.

The complementary problem concerns integers outside $\cR$.
Erd\H{o}s~\cite{Erdos} constructed, using covering congruences,
an arithmetic progression of odd integers containing no Romanoff
numbers. In particular, the upper asymptotic density of $\cR$ is
strictly less than $1/2$. Habsieger and Roblot~\cite[\S3]{HabsiegerRoblot}
subsequently obtained the explicit bound
\[
 \limsup_{X\to\infty}\frac{\#(\cR\cap[1,X])}{X}<0.490941
\]
by computations with residue classes.

Kalmynin and Konyagin~\cite[Theorem~1]{KK} proved that
\[
 G_{\cR}(X)\gg\frac{\log\log X}{\log\log\log X}.
\]

As a consequence of Theorem \ref{thm:block}, we obtain the following improvement.

\begin{samepage}
\begin{theorem}\label{cor:base}
Let $a\ge2$ be a fixed integer and let
$\cR_a=\{p+a^n:p\in\PP,\ n\in\cN\}$. Then
\[
 G_{\cR_a}(X)\gg_a\log\log X
\]
for all sufficiently large $X$, where $G_{\cR_a}$ is defined
analogously to $G_{\cR}$.
\end{theorem}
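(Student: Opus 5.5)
Apply Theorem~\ref{thm:block} with $\cB=\cB_a=\{a^n:n\in\cN\}$. Then $\PP+\cB_a=\cR_a$ and
\[
 B_a(X)=\#\{n\ge1:a^n\le X\}=\lfloor \log X/\log a\rfloor\le \frac{\log X}{\log 2}.
\]
Hence $B_a(X)+1\le 2\log X$ for $X\ge X_*$ with $X_*$ large. Consequently
\[
 H=\left\lfloor c_*\frac{\log X\log\log X}{B_a(X)+1}\right\rfloor\ge\left\lfloor \frac{c_*}{2}\log\log X\right\rfloor,
\]
which is at least $1$, and indeed at least $\frac{c_*}{4}\log\log X$, once $X$ is sufficiently large. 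The bound on $B_a(X)$ is in fact uniform in $a\ge2$, so any dependence on $a$ enters only through the threshold for "sufficiently large".

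Theorem~\ref{thm:block} then gives an integer $N$ with $\{N+1,\ldots,N+H\}\subseteq[X/2,X]$ and $\{N+1,\ldots,N+H\}\cap\cR_a=\varnothing$. Since $N+1\ge X/2>0$, we have $N\ge0$, so $N\in\cZ_{\ge0}$. Also $N+H\le X$, so this block is admissible in the definition of $G_{\cR_a}(X)$. Therefore
\[
 G_{\cR_a}(X)\ge H\ge \frac{c_*}{4}\log\log X
\]
for all sufficiently large $X$, which is the claim. The implied constant is even absolute here.

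No step is a real obstacle, because all the work is in Theorem~\ref{thm:block}, which we are allowed to assume. The only points that need care are routine:
\begin{itemize}
\item the count of $\cB_a\cap[0,X]$ must match the definition of $B(X)$, which counts elements, not exponents; here they agree because distinct $n$ give distinct $a^n$ when $a\ge 2$;
\item the floor in $H$ must be absorbed by taking $X$ large enough that $\frac{c_*}{2}\log\log X\ge 2$;
\item the block must satisfy the normalization $N\ge 0$, $N+H\le X$ required by the definition of $G_{\cR_a}$.
\end{itemize}
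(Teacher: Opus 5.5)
Your proposal is correct and is essentially the paper's proof: apply Theorem~\ref{thm:block} to $\cB=\{a^n:n\in\cN\}$, bound $B(X)+1\ll\log X$, and read off $H\gg\log\log X$. The only difference is that you use $B(X)\le\log X/\log 2$, which gives a constant $c_*/4$ uniform in $a$, whereas the paper's $B(X)+1\le 2\log X/\log a$ gives a constant $\asymp c_*\log a$ that improves with $a$; your explicit check that $N\ge0$ and $N+H\le X$ is a detail the paper leaves implicit.
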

\end{samepage}

The proof of Theorem~\ref{thm:block} proceeds by covering the distinct shifts $j-a$, with
$1\le j\le H$ and $a\in\cB\cap[0,X]$, by congruences modulo primes.
Write $x=\frac13\log X$. Averaging over residue classes modulo the
product of the primes up to $x$ leaves $O(HB(X)/\log x)$ shifts
to be covered. By \eqref{eq:H}, their number is at most a sufficiently
small constant times $x$, so the large-shift lemma applies.

To ensure that the prime divisors obtained in this way are proper,
we place the block in the middle third of an interval disjoint from
$\cB$. For each $a\in\cB$, the difference between an integer in the
block and $a$ is then either negative or larger than all the primes
used in the construction.

We use the weight construction of \cite{LG}, with the modifications needed to handle large shifts. The product cutoff in this construction is analogous to the simplex restriction used in the Maynard--Tao
sieve; see Maynard~\cite[\S~7]{Maynard}. Proposition~1.2 of~\cite{LG} is stated for shifts of small diameter.
Our problems involve large negative shifts $j-a$, so a separate
adaptation of its proof is required. This is given in detail in
Section~\ref{sec:lemma}. 

After fixing notation in Section~\ref{sec:notation}, we prove
the large-shift lemma in Section~\ref{sec:lemma}.
Theorem~\ref{thm:block} is proved in Section~\ref{sec:block-proof}.
In Section~\ref{sec:complements}, we deduce
Theorems~\ref{cor:complement} and~\ref{cor:base} from
Theorem~\ref{thm:block}.

\section{Notation}\label{sec:notation}

We use $(m,n)$ for the greatest common divisor of integers $m,n$,
and $\sum_{r\bmod q}$ for a sum over a complete system of residue
classes modulo $q$. Unless stated otherwise, sums indexed by $p$ or $q$
are over primes satisfying the indicated conditions. The usual floor
and ceiling functions are denoted
by $\lfloor v\rfloor$ and $\lceil v\rceil$. Empty sums and products are
understood to be $0$ and $1$, respectively. The symbol
$\ind_{\mathcal E}$ denotes the indicator of a condition $\mathcal E$.
For $n\ge1$, we write $\varphi(n)$ for Euler's totient function and put
\[
 \omega(n)=\#\{p\in\PP:p\mid n\},\qquad
 \sigma_1(n)=\sum_{d\mid n}d.
\]
All divisors in sums and products are positive unless stated otherwise.
Euler's constant is denoted by $\gamma_E$.

For $g>0$, we write $f\ll g$ or $f=O(g)$ if $|f|\le Cg$
for a positive constant $C$; as usual, $f\gg g$ and $f\asymp g$
have the corresponding meanings, while $f=o(g)$ means $f/g\to0$.
Implied constants are absolute unless a dependence is indicated,
as in $\ll_a$. Limits are taken as $x$ or $X$ tends to infinity,
as appropriate. The constants $C_1\ge1$ and $\eta>0$, once chosen,
remain fixed in the proof.

\section{A lemma on large shifts}\label{sec:lemma}

\begin{lemma}\label{lem:translate}
There exists an absolute constant $\eta>0$ with the following property.
Let $x$ be sufficiently large, let $S=\{s_1,\ldots,s_k\}$ be
nonempty, and suppose that
\[
 Q=\prod_{p\le x}p,
 \qquad s_1,\ldots,s_k\in\cZ,
 \qquad |s_i|\le e^{3x},
 \qquad k\le\eta x,
\]
where the shifts $s_i$ are pairwise distinct. Then, for every
$b\in\cZ$, there exists an integer $t$ with $1\le t\le e^x$ such
that each number $b+Qt+s_i$ is divisible by a prime in the interval
\[
 \left(x^6,\ \exp\!\left(\frac{x}{(\log x)^5}\right)\right].
\]
\end{lemma}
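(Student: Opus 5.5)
We will prove this by a probabilistic covering argument in the style of the large-gaps machinery of Ford--Green--Konyagin--Maynard--Tao, adapted as in \cite{LG}. Set $y=\exp(x/(\log x)^5)$ and let $\mathcal P=\{p\text{ prime}: x^6<p\le y\}$. Since $(Q,p)=1$ for each $p\in\mathcal P$, the map $t\mapsto b+Qt \pmod p$ is a bijection. So choosing $t$ is equivalent to choosing residues $n_p=b+Qt\bmod p$ jointly via the Chinese remainder theorem, restricted to $1\le t\le e^x$. The goal is to find $t$ such that every $s_i$ satisfies $n_p\equiv -s_i\pmod p$ for some $p\in\mathcal P$.

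The plan has two stages. In the first stage, a \emph{random} choice of $t\in[1,e^x]$ covers each shift with probability roughly $1-\prod_{p\in\mathcal P}(1-1/p)\approx 1-\frac{6\log x}{\log y}$. This leaves about $k\cdot 30\log\log x/x\cdot\log x$ uncovered shifts in expectation. This is far below $k$, but a naive random choice cannot cover everything.

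We therefore use the weight construction of \cite{LG}. Define a weight $w(t)$ on $t\in[1,e^x]$ as a truncated product of divisor sums
\[
w(t)=\prod_{i=1}^k\Bigl(\sum_{d\mid b+Qt+s_i,\ d\le D_i}\lambda_d\Bigr)^2,
\]
with the product cutoff $\prod_i D_i\le e^{x/2}$, analogous to the Maynard simplex condition. The weight favours $t$ for which many $b+Qt+s_i$ have small prime factors in $\mathcal P$. We then show that, under $w$, the expected number of $i$ for which $b+Qt+s_i$ has no prime factor in $\mathcal P$ is $<1$. This requires computing $\sum_t w(t)$ and $\sum_t w(t)\ind_{p\mid b+Qt+s_i}$. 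The count over $t$ in a progression modulo $\prod d$ (with modulus $\le e^{x/2}<e^x$) gives main term $e^x/\prod d$ plus $O(1)$ error, and the product cutoff makes the errors negligible. The main term factorizes into an Euler product over $p\in\mathcal P$, provided the local conditions for different $i$ at a prime $p$ are independent. They are independent exactly when $p\nmid s_i-s_j$.

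The main obstacle is that the shifts are large, $|s_i|\le e^{3x}$. So a prime $p\in\mathcal P$ may divide $s_i-s_j$, which breaks the local density computation; in \cite{LG} small diameter excluded this. To handle it, we remove auxiliary primes. Each nonzero difference $s_i-s_j$ has at most $3x/\log(x^6)=x/(2\log x)$ prime factors exceeding $x^6$. Hence the set $\mathcal E$ of primes in $\mathcal P$ dividing some difference has size at most $k^2x/\log x\le \eta^2x^3/\log x$. The sum $\sum_{p\in\mathcal E}1/p\le |\mathcal E|/x^6$ is negligible. We therefore restrict all divisors $d$ to be supported on $\mathcal P\setminus\mathcal E$. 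The Mertens-type sums over $\mathcal P\setminus\mathcal E$ are unchanged up to $o(1)$, and for the remaining primes the residue classes $-s_i\bmod p$ are distinct, so the Euler factors behave as in the small-shift case. With $k\le\eta x$ and $\eta$ small, the resulting bound on the weighted expected number of uncovered shifts is $<1$. Positivity of $w$ and of $\sum_t w(t)$ then yields the desired $t$. The delicate bookkeeping is verifying that the truncation error and the factor coming from $\eta$ balance; we would follow Proposition~1.2 of \cite{LG} line by line, inserting the condition $p\notin\mathcal E$ throughout.
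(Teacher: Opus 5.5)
Your two structural ideas match the paper's. You discard the primes of $(x^6,y]$ dividing some $s_i-s_j$: their reciprocal sum is $\ll x^{-3}$, so the divisor sums over the reduced product $P_*$ change by $o(1)$. You then run the weighted sieve of \cite{LG} over the remaining primes, where the roots of the forms are distinct. (Distinct roots make the events $p\mid L_i(t)$ and $p\mid L_j(t)$ disjoint, not independent; this is what kills cross terms in which one prime occupies two different coordinates.)

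\textbf{The weight you write down would fail.} With fixed levels $D_i$ and $\prod_iD_i\le e^{x/2}$, fewer than $x/(12\log x)$ of the $D_i$ can exceed $x^6$, while every admissible $d>1$ exceeds $x^6$. Since $k$ may be as large as $\eta x$, for most forms the only admissible divisor is $d=1$. The weight exerts no pull on them, and one expects $\gg\eta(\log x)^6$ of them to remain uncovered, as for a random $t$. Making every $D_i$ large instead pushes the moduli in the expansion up to $\prod_iD_i^2$, far beyond $T=e^x$.

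The construction that works, in \cite{LG} and in the paper, is a \emph{single} square $w=R^2$ with $R(t)=\sum_{\bm r\in\mathcal D_k}\prod_ia(r_i)\psi_{i,r_i}(t)$. Here the sum runs over $k$-tuples of pairwise coprime divisors of $P_*$ with the joint cutoff $r_1\cdots r_k\le D=e^{x/4}$, chosen so that $D^3/T\to0$. The level budget is then shared among the forms and most coordinates are $1$. Rankin's trick with $A_\beta\le A+C_1\beta$ bounds the truncation loss by $A^ke^{-u/2}$ once $C_1k\le\frac12\log D$; this is exactly where $k\le\eta x$ enters.

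\textbf{The mechanism that proves the lemma is missing.} Sums $\sum_tw(t)\ind_{p\mid L_i(t)}$ give per-prime covering densities. They do not bound the number of uncovered forms without an inclusion--exclusion over many primes. What is needed is a bound for $\sum_tw(t)\ind_{(L_i(t),P_*)=1}$, which the paper gets from a specific coefficient choice:
\begin{itemize}
\item $a(1)=1$ and $a(d)=-1/(B\log d)$ for $1<d\mid P_*$;
\item $B$ is computed over the reduced set, so that $\sum_{d\mid P_*}a(d)/\varphi(d)=0$ exactly;
\item the local factors are normalized as $\psi_{i,p}=(1-p\ind_{p\mid L_i})/(p-1)$.
\end{itemize}
When $L_i(t)$ is uncovered, $\psi_{i,d}(t)=1/\varphi(d)$. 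So the $i$th coordinate sum collapses to the tail
\[
 \varepsilon(v)=\sum_{d\mid P_*,\,d\le D/v,\,(d,v)=1}a(d)/\varphi(d)\ll(v/D)^\beta+\log D/z,
\]
where $v$ is the product of the other coordinates. This yields $S_2\ll kA^k(e^{-u}+(\log D)^2/z^2)+kD^3/T$. The Gram-matrix estimate gives $S_1=(1+o(1))A^k$, so $S_2/S_1=o(1)$.

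Your proposal contains none of this and defers it to following \cite{LG} ``line by line''. As written, it supplies the new ingredient (removal of the exceptional primes). It supplies neither a valid weight nor a mechanism that makes the weighted average number of uncovered forms less than $1$.
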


\begin{remark}
The lemma asserts divisibility only. Positivity of the values and the
fact that the resulting prime divisors are proper are checked when the
lemma is applied. 
\end{remark}

The proof of Lemma~\ref{lem:translate} occupies the rest of this
section. We first deal with the primes at which two forms have a
common root, then construct a truncated weight. The final two
estimates compare its total mass with the contribution of uncovered
forms.

We may assume that $\eta\le1$. Fix the parameters
\begin{equation}\label{eq:parameters}
 \begin{gathered}
 z=x^6,\qquad y=\exp\!\left(\frac{x}{(\log x)^5}\right),
 \qquad D=e^{x/4},\qquad T=\lfloor e^x\rfloor,\\
 \beta=\frac1{\log y},\qquad
 u=\frac{\log D}{\log y}=\frac14(\log x)^5.
 \end{gathered}
\end{equation}

\subsection{Exceptional primes and divisor sums}

Put
\[
 \Delta=\prod_{1\le i<j\le k}|s_i-s_j|,
\]
where an empty product is understood to be $1$.
Define
\[
 P_0=\prod_{z<p\le y}p,\qquad
 P_* =\prod_{\substack{z<p\le y\\p\nmid\Delta}}p.
\]
For $1\le i\le k$, write $L_i(t)=b+Qt+s_i$.

\begin{lemma}[Exceptional primes]\label{lem:exceptional-primes}
We have
\begin{equation}\label{eq:exceptional}
 \sum_{\substack{p>z\\p\mid\Delta}}\frac1p
 \le\frac{\log\Delta}{z\log z}
 \ll\frac1{x^3\log x}.
\end{equation}
Moreover, for every $p\mid P_*$, the congruences
$L_i(t)\equiv0\pmod p$, $1\le i\le k$, have distinct roots.
\end{lemma}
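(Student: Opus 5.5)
The plan is to prove the two assertions separately, both by elementary means.

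For the bound \eqref{eq:exceptional}, the first step is to count the prime divisors of $\Delta$ exceeding $z$. Each such prime satisfies $p>z$, and their product divides $\Delta$. Hence their number $m$ satisfies $z^m\le\Delta$, that is, $m\le\log\Delta/\log z$. Each term $1/p$ is less than $1/z$, so
\[
 \sum_{\substack{p>z\\p\mid\Delta}}\frac1p\le\frac{m}{z}\le\frac{\log\Delta}{z\log z}.
\]
This step requires $\Delta\neq0$, which holds because the shifts are pairwise distinct. When $k=1$ we have $\Delta=1$, the sum is empty, and the inequality is trivial. Next I bound $\log\Delta$. Each factor satisfies $|s_i-s_j|\le 2e^{3x}$, and there are at most $k^2/2$ factors. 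Therefore $\log\Delta\le \tfrac{k^2}{2}(3x+\log 2)\ll \eta^2x^3\le x^3$, using $k\le\eta x$ and $\eta\le1$. Substituting $z=x^6$ and $\log z=6\log x$ then gives $\log\Delta/(z\log z)\ll x^3/(x^6\log x)=1/(x^3\log x)$, which is the claimed bound.

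For the second assertion, fix a prime $p\mid P_*$, so that $z<p\le y$ and $p\nmid\Delta$. Since $p>z=x^6>x$, the prime $p$ does not divide $Q=\prod_{q\le x}q$. Consequently $Q$ is invertible modulo $p$. Each congruence $L_i(t)\equiv0\pmod p$ then has exactly one root, namely $t\equiv -Q^{-1}(b+s_i)\pmod p$. Suppose two roots coincide for some $i\ne j$. Subtracting the two congruences gives $s_i\equiv s_j\pmod p$, so $p\mid s_i-s_j$, and hence $p\mid\Delta$, a contradiction. So the $k$ roots are distinct.

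No step is a real obstacle. The only points needing care are the justification that $\Delta\ne0$, from the distinctness of the shifts, and the observation that $p>x$ guarantees $p\nmid Q$.
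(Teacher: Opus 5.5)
Your proposal is correct and follows essentially the same route as the paper: you bound the number of prime divisors of $\Delta$ exceeding $z$ by $\log\Delta/\log z$ (your $z^m\le\Delta$ is the paper's $\sum_{p\mid\Delta}\log p\le\log\Delta$), use $\log\Delta\le\binom{k}{2}(3x+\log 2)=O(x^3)$, and obtain distinct roots from $(p,Q)=1$ together with $p\nmid s_i-s_j$. Your explicit remarks that $\Delta\ne0$ and that the case $k=1$ is trivial are points the paper leaves implicit.
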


\begin{proof}
Since $k\le\eta x$ and $|s_i|\le e^{3x}$, we have
\[
 \log\Delta\le\binom{k}{2}(3x+\log2)=O(x^3).
\]
We have
\[
\#\{p: p>z, p|\Delta\}\le \sum_{\substack{p>z\\ p|\Delta}}\frac{\log p}{\log p}
\le \frac{1}{\log z}\sum_{p|\Delta}\log p\le \frac{\log \Delta}{\log z}.
\]
This proves \eqref{eq:exceptional}.
For every $p\mid P_*$, we have $p>z>x$, and hence
$(p,Q)=1$. Therefore, $Q$ is invertible modulo $p$,
and each congruence
\[
L_i(t)=b+Qt+s_i\equiv0\pmod p
\]
has exactly one root, namely
\[
t\equiv -(b+s_i)Q^{-1}\pmod p.
\]
We now show that these roots are pairwise distinct.
Indeed, if the roots corresponding to two indices
$i\ne j$ coincided, then there would exist
an integer $t$ such that
\[
b+Qt+s_i\equiv b+Qt+s_j\equiv0\pmod p.
\]
Subtracting these congruences, we would obtain
\[
s_i\equiv s_j\pmod p,
\]
so that $p\mid s_i-s_j$. Consequently, $p\mid\Delta$,
contrary to the definition of $P_*$. Thus the roots
are pairwise distinct, as required.
\end{proof}

Introduce
\[
 B_0=\sum_{\substack{d\mid P_0\\d>1}}
       \frac1{\varphi(d)\log d},\qquad
 B=\sum_{\substack{d\mid P_*\\d>1}}
       \frac1{\varphi(d)\log d},\qquad
 U_0=\sum_{d\mid P_0}\frac1{\varphi(d)}.
\]

\begin{lemma}[Divisor-sum estimates]\label{lem:divisor-sums}
For the sums defined above,
\begin{equation}\label{eq:full-estimates}
 B_0\asymp1,\qquad
 \sum_{\substack{d\mid P_0\\d>1}}
       \frac1{\varphi(d)(\log d)^2}\ll\frac1{(\log x)^2},
 \qquad U_0\asymp\frac{\log y}{\log z}.
\end{equation}
Moreover, $B\asymp1$. All implied constants are absolute.
\end{lemma}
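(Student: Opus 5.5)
The plan is to prove all four estimates from the integral representation
\[
 \frac1{\log d}=\int_0^\infty d^{-s}\,ds,\qquad
 \frac1{(\log d)^2}=\int_0^\infty s\,d^{-s}\,ds\qquad(d>1),
\]
together with multiplicativity over squarefree $d\mid P_0$. These give
\[
 B_0=\int_0^\infty\bigl(F(s)-1\bigr)\,ds,\qquad
 \sum_{\substack{d\mid P_0\\d>1}}\frac1{\varphi(d)(\log d)^2}
 =\int_0^\infty s\bigl(F(s)-1\bigr)\,ds,
\]
where
\[
 F(s)=\prod_{z<p\le y}\Bigl(1+\frac1{(p-1)p^s}\Bigr),\qquad
 \Sigma(s)=\sum_{z<p\le y}\frac1{(p-1)p^{s}}.
\]
We have $e^{\Sigma(s)-O(1/z)}\le F(s)\le e^{\Sigma(s)}$. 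By partial summation from the prime number theorem,
\[
 \Sigma(s)=\int_{\log z}^{\log y}\frac{e^{-sv}}{v}\,dv+O\Bigl(\frac1{\log z}\Bigr)
 =E_1(s\log z)-E_1(s\log y)+O\Bigl(\frac1{\log z}\Bigr),
\]
where $E_1(w)=\int_w^\infty e^{-v}v^{-1}\,dv$. Note that $\log z=6\log x$ and $\log y=x/(\log x)^5$, so $L:=\log(\log y/\log z)=\log x+O(\log\log x)$.

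\emph{Step 1: $B_0\asymp1$.} Split the integral over $s$ at $1/\log y$ and $1/\log z$.
\begin{itemize}
\item For $s\le1/\log y$, $\Sigma(s)\le L+O(1)$, so $F(s)\ll\log y/\log z$. This range contributes $\ll1/\log z$.
\item For $1/\log y\le s\le 1/\log z$, $\Sigma(s)=\log\bigl(1/(s\log z)\bigr)+O(1)$. Hence $F(s)\asymp 1/(s\log z)$, and since $F(s)-1\ge\Sigma(s)$ one also gets $F(s)-1\asymp 1/(s\log z)$ away from the right endpoint. This range therefore contributes $\asymp L/\log z\asymp 1$, which gives both the upper and the lower bound.
\item For $s\ge1/\log z$, $\Sigma(s)\ll e^{-s\log z}$, so $F(s)-1\ll e^{-s\log z}$. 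This range contributes $O(1/\log z)$.
\end{itemize}
Together these give $B_0\asymp1$.

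\emph{Step 2: the second estimate.} Run the same three ranges with the extra factor $s$:
\begin{itemize}
\item the first range gives $\ll (\log y/\log z)(\log y)^{-2}$, which is negligible;
\item the middle range gives $\ll\int_0^{1/\log z}s\cdot\frac{1}{s\log z}\,ds=(\log z)^{-2}$;
\item the tail gives $\ll\int s\,e^{-s\log z}\,ds=(\log z)^{-2}$.
\end{itemize}
Since $\log z\asymp\log x$, this is $\ll(\log x)^{-2}$.

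\emph{Step 3: $U_0$.} Here $U_0=\prod_{z<p\le y}p/(p-1)$, and Mertens' theorem gives $U_0\asymp\log y/\log z$ directly.

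\emph{Step 4: $B\asymp1$.} Clearly $B\le B_0\ll1$. Every $d\mid P_0$ that does not divide $P_*$ can be written as $d=pd'$ with $p\mid\Delta$, $p>z$ and $d'\mid P_0$. For such $d$ we have $\log d\ge\log z$ and $\varphi(d)=(p-1)\varphi(d')$. Therefore
\[
 B_0-B\le\frac1{\log z}\sum_{\substack{p>z\\p\mid\Delta}}\frac1{p-1}\,U_0
 \ll\frac{\log y}{(\log z)^2}\cdot\frac1{x^3\log x}
 \ll\frac{1}{x^{2}},
\]
using \eqref{eq:exceptional} from Lemma~\ref{lem:exceptional-primes}. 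This error is $o(1)$, so $B\asymp1$.

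\emph{Main obstacle.} The only delicate point is the lower bound in Step 1, namely that the middle range genuinely contributes $\gg1$. This needs $\Sigma(s)\ge\log\bigl(1/(s\log z)\bigr)-O(1)$ uniformly for $s\in[1/\log y,\,c/\log z]$. I would obtain it from the prime number theorem with a Mertens-type error term, which makes the partial-summation error $O(1/\log z)$. That is harmless here, because it only enters the exponent of $F(s)$ and so changes $F(s)$ by a bounded factor.
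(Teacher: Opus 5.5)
Your proof is correct. For $U_0$ and for $B\asymp1$ it coincides with the paper's argument: Mertens' product, and then removing the primes $p\mid\Delta$ at a cost of $U_0(\log z)^{-1}\sum_{p\mid\Delta,\,p>z}1/(p-1)$. For $B_0$ and the $(\log d)^{-2}$ sum, however, you take a genuinely different route.

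The paper stays with finite sums. It writes each $d>1$ as $d=pe$ with $p$ the largest prime factor of $d$, so that $B_0=\sum_{z<p\le y}T_p/(p-1)$ with $T_p=\sum_{e\mid P_{<p}}1/(\varphi(e)\log(pe))$. The trivial bound $T_p\le V_p/\log p$ and a Cauchy--Schwarz lower bound against the moment $\sum_{e\mid P_{<p}}\log e/\varphi(e)=V_p\sum_{z<q<p}\log q/q$ give $T_p\asymp1/\log z$ uniformly in $p$. Hence $B_0\asymp(\log z)^{-1}\sum_{z<p\le y}1/(p-1)$, and the same grouping with the trivial bound handles the second sum. That argument uses Mertens' estimates only at individual points and needs no uniformity in an auxiliary parameter.

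Your representation $1/\log d=\int_0^\infty d^{-s}\,ds$ instead reduces both sums to the single Euler product $F(s)$. This is the same multiplicative device the paper later uses for $U_\gamma$, now with $d^{-s}$ integrated over $s$. It treats all weights $(\log d)^{-m}$ at once and is sharper. Where $s\log z\to0$ and $s\log y\to\infty$ you get $F(s)=(e^{-\gamma_E}+o(1))/(s\log z)$, which could be pushed to $B_0\sim e^{-\gamma_E}\log(\log y/\log z)/\log z$.

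The price is uniform control of $\Sigma(s)$ in $s$, and two steps need slightly more care.

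\begin{itemize}
\item \emph{The tail.} The bound $\Sigma(s)\ll e^{-s\log z}$ for $s\ge1/\log z$ does not follow from an expansion with error $O(1/\log z)$. Partial summation with Mertens' estimate does give the better error $O(z^{-s}/\log z)$. Alternatively, avoid the issue: use $F(s)-1\le\Sigma(s)e^{\Sigma(s)}\ll\Sigma(s)$ for $s\ge1/\log z$ and integrate termwise, $\int_{1/\log z}^\infty s^m\Sigma(s)\,ds\le\sum_{p>z}1/((p-1)(\log p)^{m+1})\ll(\log z)^{-m-1}$ for $m=0,1$.
\item \emph{The middle-range lower bound.} As written, ``$F(s)-1\ge\Sigma(s)$'' alone only gives a logarithmic lower bound. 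The clean version: for $s\le c/\log z$ with $c$ small, $F(s)-1\ge\Sigma(s)\ge1$, so $F(s)\ge2$ and therefore $F(s)-1\ge F(s)/2\gg1/(s\log z)$.
\end{itemize}
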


\begin{proof}
We shall use the following standard forms of Mertens' estimates:
\[
 \prod_{q\le t}\left(1-\frac1q\right)^{-1}
 \asymp\log t,\qquad
 \sum_{q\le t}\frac{\log q}{q}\ll\log t,
\]
and
\[
 \sum_{q\le t}\frac1q
 =\log\log t+M+O\!\left(\frac1{\log t}\right),
\]
where $M$ is an absolute constant and the letters $p$ and $q$
denote primes in these Mertens estimates.

\medskip\noindent
\textit{The estimate for $U_0$.}
Since $P_0$ is squarefree, multiplicativity gives
\[
 U_0
 =\sum_{d\mid P_0}\frac1{\varphi(d)}
 =\prod_{z<p\le y}\left(1+\frac1{p-1}\right)
 =\prod_{z<p\le y}\left(1-\frac1p\right)^{-1}.
\]
Consequently, Mertens' product estimate yields
\[
 U_0
 =\frac{\displaystyle
          \prod_{p\le y}(1-1/p)^{-1}}
        {\displaystyle
          \prod_{p\le z}(1-1/p)^{-1}}
 \asymp\frac{\log y}{\log z}.
\]

\medskip\noindent
\textit{Grouping divisors by their largest prime factor.}
For each prime $p\in(z,y]$, we put
\[
 P_{<p}=\prod_{z<q<p}q,\qquad
 V_p=\sum_{e\mid P_{<p}}\frac1{\varphi(e)}.
\]
Again by multiplicativity and Mertens' product estimate,
\[
 V_p=\prod_{z<q<p}\left(1-\frac1q\right)^{-1}
 \asymp\frac{\log p}{\log z},
\]
uniformly for $z<p\le y$. Indeed,
\[
 V_p
 =\left(1-\frac1p\right)
   \frac{\displaystyle
          \prod_{q\le p}(1-1/q)^{-1}}
        {\displaystyle
          \prod_{q\le z}(1-1/q)^{-1}}.
\]

Every divisor $d>1$ of $P_0$ has a unique representation
$d=pe$, where $p$ is its largest prime factor and
$e\mid P_{<p}$. Since $(p,e)=1$, we have
$\varphi(pe)=(p-1)\varphi(e)$. Thus
\[
 B_0
 =\sum_{z<p\le y}\frac1{p-1}
   \sum_{e\mid P_{<p}}
       \frac1{\varphi(e)\log(pe)}.
\]

\medskip\noindent
\textit{Upper and lower bounds for the inner sum.}
Let
\[
 T_p=\sum_{e\mid P_{<p}}
          \frac1{\varphi(e)\log(pe)}.
\]
Hence,
\begin{equation}\label{B_VIEW}
 B_0 = \sum_{z<p \le y} \frac{T_p}{p-1}.
\end{equation}

Let us show that
\[
T_p \asymp \frac{V_p}{\log p}.
\]
The inequality $\log(pe)\ge\log p$ immediately gives
\[
 T_p\le\frac{V_p}{\log p}.
\]

For the lower bound, we first evaluate a logarithmic moment.
Fix a prime $q\mid P_{<p}$. Since $P_{<p}$ is squarefree,
every divisor $e\mid P_{<p}$ divisible by $q$ can be written
uniquely as
\[
 e=qf,\qquad f\mid P_{<p}/q,\qquad (q,f)=1.
\]
By the multiplicativity of $\varphi$, we have
$\varphi(qf)=(q-1)\varphi(f)$. Therefore
\[
 \sum_{\substack{e\mid P_{<p}\\q\mid e}}
       \frac1{\varphi(e)}
 =\frac1{q-1}
   \sum_{f\mid P_{<p}/q}\frac1{\varphi(f)}.
\]
We see that
\[
 \sum_{f\mid P_{<p}/q}\frac1{\varphi(f)}
 =\prod_{\substack{z<r<p\\r\in\PP\\r\ne q}}
       \left(1+\frac1{r-1}\right).
\]
On the other hand, separating the Euler factor corresponding
to $q$ in the product for $V_p$, we obtain
\[
 \begin{aligned}
 V_p
 &=\prod_{\substack{z<r<p\\r\in\PP}}\left(1+\frac1{r-1}\right)\\
 &=\left(1+\frac1{q-1}\right)
   \prod_{\substack{z<r<p\\r\in\PP\\r\ne q}}
       \left(1+\frac1{r-1}\right)\\
 &=\frac{q}{q-1}
   \prod_{\substack{z<r<p\\r\in\PP\\r\ne q}}
       \left(1+\frac1{r-1}\right).
 \end{aligned}
\]
Consequently,
\[
 \begin{aligned}
 \sum_{\substack{e\mid P_{<p}\\q\mid e}}
       \frac1{\varphi(e)}
 &=\frac1{q-1}
   \prod_{\substack{z<r<p\\r\in\PP\\r\ne q}}
       \left(1+\frac1{r-1}\right)\\
 &=\frac1{q-1}\cdot\frac{q-1}{q}\,V_p
 =\frac{V_p}{q}.
 \end{aligned}
\]
Since all divisors of $P_{<p}$ are squarefree, it follows that
\[
 \begin{aligned}
 \sum_{e\mid P_{<p}}\frac{\log e}{\varphi(e)}
 &=\sum_{e\mid P_{<p}}\frac{1}{\varphi(e)}\sum_{q|e} \log q\\
 &=\sum_{z<q<p}\log q
   \sum_{\substack{e\mid P_{<p}\\q\mid e}}
       \frac1{\varphi(e)}
 =V_p\sum_{z<q<p}\frac{\log q}{q}.
  \end{aligned}
\]
Therefore
\[
 \sum_{e\mid P_{<p}}\frac{\log(pe)}{\varphi(e)}
 =V_p\left(\log p+
          \sum_{z<q<p}\frac{\log q}{q}\right)
 \ll V_p\log p.
\]

Applying the Cauchy--Schwarz inequality, we obtain
\[
 \begin{aligned}
 V_p^2
 &=
 \left(
   \sum_{e\mid P_{<p}}
   \sqrt{\frac1{\varphi(e)\log(pe)}}
   \sqrt{\frac{\log(pe)}{\varphi(e)}}
 \right)^2\\
 &\le
 T_p\sum_{e\mid P_{<p}}
          \frac{\log(pe)}{\varphi(e)}
 \ll T_pV_p\log p.
 \end{aligned}
\]
Hence
\[
 T_p\gg\frac{V_p}{\log p}.
\]
Combining the two bounds, we conclude that
\[
 T_p\asymp\frac{V_p}{\log p}
 \asymp\frac1{\log z},
\]
uniformly for $z<p\le y$.

\medskip\noindent
\textit{The estimate for $B_0$.}
By \eqref{B_VIEW} we have
\[
 B_0
 \asymp\frac1{\log z}
       \sum_{z<p\le y}\frac1{p-1}.
\]
Since
\[
 \sum_{z<p\le y}
 \left(\frac1{p-1}-\frac1p\right)
 \ll\sum_{n>z}\frac1{n^2}
 \ll\frac1z,
\]
Mertens' estimate implies
\[
 \sum_{z<p\le y}\frac1{p-1}
 =\log\!\left(\frac{\log y}{\log z}\right)
  +O\!\left(\frac1{\log z}\right).
\]
For our choice of parameters,
\[
 \frac{\log y}{\log z}
 =\frac{x}{6(\log x)^6},
\]
and consequently
\[
 \frac1{\log z}
 \log\!\left(\frac{\log y}{\log z}\right)
 =
 \frac{\log x-6\log\log x-\log6}{6\log x}
 \longrightarrow\frac16.
\]
It follows that $B_0\asymp1$.

\medskip\noindent
\textit{The sum with $(\log d)^2$ in the denominator.}
Grouping divisors by their largest prime factor once more,
we obtain
\[
 \begin{aligned}
 \sum_{\substack{d\mid P_0\\d>1}}
       \frac1{\varphi(d)(\log d)^2}
 &=
 \sum_{z<p\le y}\frac1{p-1}
   \sum_{e\mid P_{<p}}
       \frac1{\varphi(e)(\log(pe))^2}\\
 &\le
 \sum_{z<p\le y}
       \frac{V_p}{(p-1)(\log p)^2}\\
 &\ll
 \frac1{\log z}
 \sum_{z<p\le y}\frac1{p\log p}.
 \end{aligned}
\]

To estimate the last prime sum, we define
\[
 H_z(t)=\sum_{z<p\le t}\frac{\log p}{p}
 \qquad(t\ge z).
\]
Mertens' estimate gives $H_z(t)\ll\log t$.
Partial summation therefore yields
\[
 \begin{aligned}
 \sum_{p>z}\frac1{p\log p}
 &=
 \lim_{R\to\infty}
 \left(
   \frac{H_z(R)}{(\log R)^2}
   +2\int_z^R
       \frac{H_z(t)}{t(\log t)^3}\,dt
 \right)\\
 &\ll
 \int_z^\infty\frac{dt}{t(\log t)^2}
 =\frac1{\log z}.
 \end{aligned}
\]
Thus
\[
 \sum_{\substack{d\mid P_0\\d>1}}
       \frac1{\varphi(d)(\log d)^2}
 \ll\frac1{(\log z)^2}
 \ll\frac1{(\log x)^2},
\]
because $\log z=6\log x$. This proves all three estimates
in~\eqref{eq:full-estimates}.

For every prime $p\mid P_0$, we have
\begin{equation}\label{eq:U0/p}
 \sum_{\substack{d\mid P_0\\p\mid d}}\frac1{\varphi(d)}
 =\frac{U_0}{p}.
\end{equation}
Indeed, since $P_0$ is squarefree, every divisor $d\mid P_0$
divisible by $p$ can be written uniquely as $d=pe$, where
$e\mid P_0/p$ and $(p,e)=1$. Hence
$\varphi(pe)=(p-1)\varphi(e)$, and therefore
\[
 \sum_{\substack{d\mid P_0\\p\mid d}}\frac1{\varphi(d)}
 =\frac1{p-1}\sum_{e\mid P_0/p}\frac1{\varphi(e)}.
\]
On the other hand, separating the divisors of $P_0$ according
to whether they are divisible by $p$, we obtain
\[
 \begin{aligned}
 U_0
 &=\sum_{e\mid P_0/p}\frac1{\varphi(e)}
   +\sum_{e\mid P_0/p}\frac1{\varphi(pe)}\\
 &=\left(1+\frac1{p-1}\right)
   \sum_{e\mid P_0/p}\frac1{\varphi(e)}
 =\frac{p}{p-1}\sum_{e\mid P_0/p}\frac1{\varphi(e)}.
 \end{aligned}
\]
Consequently,
\[
 \sum_{\substack{d\mid P_0\\p\mid d}}\frac1{\varphi(d)}
 =\frac1{p-1}\cdot\frac{p-1}{p}\,U_0
 =\frac{U_0}{p},
\]
and \eqref{eq:U0/p} is proved.

Let
\[
 \mathcal E=\{p\in\PP:z<p\le y,\ p\mid\Delta\}
\]
be the set of removed primes. Since $P_*\mid P_0$, we have
\[
 B_0-B
 =\sum_{\substack{d\mid P_0\\d\nmid P_*}}
       \frac1{\varphi(d)\log d}.
\]
Every divisor occurring in this sum is divisible by at least
one prime in $\mathcal E$. All summands are nonnegative, so
\[
 0\le B_0-B
 \le\sum_{p\in\mathcal E}
      \sum_{\substack{d\mid P_0\\p\mid d}}
          \frac1{\varphi(d)\log d}.
\]
A divisor containing several primes from $\mathcal E$ is counted
more than once on the right, which only increases the sum.

For each term in the inner sum, we have $d\ge p>z$, and hence
$1/\log d\le1/\log z$. Using \eqref{eq:U0/p}, \eqref{eq:exceptional}, \eqref{eq:full-estimates}, we obtain
\begin{equation}\label{eq:B-loss}
 \begin{aligned}
 0\le B_0-B
 &\le\frac1{\log z}
      \sum_{p\in\mathcal E}
      \sum_{\substack{d\mid P_0\\p\mid d}}
          \frac1{\varphi(d)}\\
 &=\frac{U_0}{\log z}
      \sum_{p\in\mathcal E}\frac1p\\
 &\ll\frac{\log y}{(\log z)^2}
      \frac1{x^3\log x}\\
 &\ll\frac1{x^2(\log x)^8}=o(1).
 \end{aligned}
\end{equation}
Since $B_0\asymp1$ and $B_0-B=o(1)$ by~\eqref{eq:B-loss}, we conclude that
$B\asymp1$. In particular, $P_*>1$ for sufficiently large $x$.
\end{proof}

\subsection{Estimates for the coefficients}

Put
\[
 \begin{gathered}
 a(1)=1,\qquad a(d)=-\frac1{B\log d}\quad(d\mid P_*,\ d>1),\\
 A_\gamma=\sum_{d\mid P_*}\frac{a(d)^2d^\gamma}{\varphi(d)},
 \qquad A=A_0.
 \end{gathered}
\]

\begin{lemma}[Coefficient estimates]\label{lem:coefficient-bounds}
For the coefficients $a(d)$ defined above, we have
\begin{equation}\label{eq:basic-coefficients}
 \sum_{d\mid P_*}\frac{a(d)}{\varphi(d)}=0,
 \qquad |a(d)|\le1,
 \qquad A=1+O((\log x)^{-2}).
\end{equation}
There is an absolute constant $C>0$ such that
\begin{equation}\label{eq:tilt}
 \sum_{\substack{d\mid P_*\\d>1}}
       \frac{|a(d)|d^\gamma}{\varphi(d)}\le C,
 \qquad A_\gamma\le A+C\gamma
 \qquad(0\le\gamma\le2\beta).
\end{equation}
Finally, for every prime $p\mid P_*$,
\begin{equation}\label{eq:prime-coefficients}
 \sum_{\substack{d\mid P_*\\p\mid d}}
       \frac{|a(d)|}{\varphi(d)}
  \ll\frac1p.
\end{equation}
All the implied constants are absolute.
\end{lemma}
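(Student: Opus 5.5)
The plan is to derive each claim from the definition $a(d)=-1/(B\log d)$ and Lemma~\ref{lem:divisor-sums}.

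\emph{The basic identities.} The vanishing sum is immediate:
\[
 \sum_{d\mid P_*}\frac{a(d)}{\varphi(d)}=1-\frac1B\sum_{\substack{d\mid P_*\\d>1}}\frac1{\varphi(d)\log d}=1-\frac{B}{B}=0 .
\]
For $|a(d)|\le1$, every $d>1$ dividing $P_*$ satisfies $d>z$, so $\log d>6\log x$. Since $B\asymp1$, we get $|a(d)|\ll 1/\log x\le 1$ for large $x$. For $A$, write
\[
 A=1+B^{-2}\sum_{\substack{d\mid P_*\\d>1}}\frac{1}{\varphi(d)(\log d)^2}.
\]
The sum is at most the corresponding sum over $d\mid P_0$, which is $\ll(\log x)^{-2}$ by \eqref{eq:full-estimates}, and $B\gg1$.

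\emph{The tilted sums \eqref{eq:tilt}.} For $d\mid P_*$ we have $d\le P_0$, but that alone is too weak. Instead I will use the largest-prime-factor decomposition $d=pe$ from the proof of Lemma~\ref{lem:divisor-sums}, together with $d^\gamma=\prod_{q\mid d}q^\gamma$ and multiplicativity.

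For the first bound, note that $|a(d)|d^\gamma/\varphi(d)\le B^{-1}d^\gamma/(\varphi(d)\log d)$. Drop the restriction to $P_*$ by positivity. The inner sum becomes
\[
 \sum_{e\mid P_{<p}}\frac{e^\gamma}{\varphi(e)\log(pe)}.
\]
I bound $1/\log(pe)\le1/\log p$ and $\sum_{e} e^\gamma/\varphi(e)=\prod_{z<q<p}(1+q^\gamma/(q-1))$. For $\gamma\le2\beta$ and $q\le y$ we have $q^\gamma\le e^2$, so $q^\gamma=1+O(\gamma\log q)$. Hence the product is
\[
 V_p\exp\Bigl(O\Bigl(\gamma\sum_{q\le y}\frac{\log q}{q}\Bigr)\Bigr)=V_p\,e^{O(\gamma\log y)}\ll V_p .
\]
The $p$-factor $p^\gamma\le e^2$ is likewise harmless. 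The total is then
\[
 \ll\sum_{z<p\le y}\frac{V_p}{p\log p}\ll\frac1{\log z}\sum_{z<p\le y}\frac1p\asymp B_0\ll1,
\]
exactly as in the estimate of $B_0$.

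For the second bound, write $A_\gamma-A=\sum_{d>1}a(d)^2(d^\gamma-1)/\varphi(d)$ and use $d^\gamma-1\le\gamma\log d\,d^\gamma$. This gives
\[
 A_\gamma-A\le\frac{\gamma}{B^2}\sum_{d>1}\frac{d^\gamma}{\varphi(d)\log d}.
\]
The last sum is exactly the one just bounded by $O(1)$, which yields $A_\gamma\le A+C\gamma$.

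\emph{The prime-restricted bound \eqref{eq:prime-coefficients}.} For $d$ divisible by $p$ we have $|a(d)|\le 1/(B\log d)\le1/(B\log p)$. Combined with \eqref{eq:U0/p} (the sum over $d\mid P_*$ is at most that over $d\mid P_0$), this gives the bound $U_0/(Bp\log p)\asymp \log y/(p\log p\log z)$, which is not $O(1/p)$. So the crude estimate fails, and this is the main obstacle.

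The fix is to keep $\log d=\log p+\log e$ with $d=pe$, $e\mid P_0/p$. This reduces the problem to
\[
 \frac{1}{p-1}\sum_{e\mid P_0/p}\frac{1}{\varphi(e)\log(pe)},
\]
and it suffices to show this inner sum is $O(1)$ uniformly in $p$. Split $e$ by its largest prime factor $r$, with $e=1$ contributing $1/\log p$. The term with $e=rf$ is at most
\[
 \frac{1}{r-1}\sum_{f\mid P_{<r}}\frac1{\varphi(f)\log(\max(p,r))}\ll\frac{V_r}{r\log \max(p,r)}\ll\frac{\log r}{r\log z\,\log\max(p,r)}\le\frac{1}{r\log z}.
\]
Summing over $z<r\le y$ gives $\ll\log(\log y/\log z)/\log z\ll1$, the same computation as for $B_0$. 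Hence the whole expression is $O(1/p)$, using $B\gg1$.
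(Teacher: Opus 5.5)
Your proof is correct. The vanishing sum, the bound $|a(d)|\le1$, the estimate for $A$, and the deduction $A_\gamma-A\le(\gamma/B^2)\sum_{d>1}d^\gamma/(\varphi(d)\log d)$ all coincide with the paper's argument. The paper writes this last sum as $B^{-1}\sum_{d>1}|a(d)|d^\gamma/\varphi(d)$, which is the same quantity.

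The genuine difference is in the first bound of \eqref{eq:tilt}. You rerun the largest-prime-factor decomposition from Lemma~\ref{lem:divisor-sums}, using tilted partial products $\prod_{z<q<p}(1+q^\gamma/(q-1))\ll V_p$. The paper instead splits at $d=y$:
\begin{itemize}
\item for $1<d\le y$ it uses $d^\gamma\le e^2$ and the definition of $B$, which bounds that range by $e^2$ at once;
\item for $d>y$ it uses $1/\log d\le1/\log y$ together with the full tilted product $U_\gamma\ll U_0\ll\log y/\log z$.
\end{itemize}
Both routes rest on the same comparison $\prod(1+(q^\gamma-1)/q)\le e^{O(\gamma\log y)}$. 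The paper's split avoids repeating the $B_0$ computation; your version is equally valid.

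In \eqref{eq:prime-coefficients} you and the paper share the key idea: keep $\log(pe)$ rather than bounding it below by $\log p$. Your second largest-prime-factor decomposition is unnecessary, though. Since $\log(pe)\ge\log e$ for $e>1$, the sum over $e>1$ is at most $B$ (or $B_0$) immediately. Your bound $\log r/(r\log z\log\max(p,r))\le1/(r\log z)$ only uses $\log\max(p,r)\ge\log r$, so that step is just a reproof of $B_0\ll1$.
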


\begin{proof}
We first check the cancellation identity and the size of the coefficients.
Here and below, a statement about $a(d)$ is understood to concern
divisors $d$ of $P_*$. Since $\varphi(1)=1$, the definition of $B$
and that of $a(d)$ give the exact cancellation
\[
 \sum_{d\mid P_*}\frac{a(d)}{\varphi(d)}
 =1-\frac1B\sum_{\substack{d\mid P_*\\d>1}}
                  \frac1{\varphi(d)\log d}
 =1-\frac BB=0.
\]
Next, $|a(1)|=1$. If $d>1$ divides $P_*$, then $d$ has a prime
factor greater than $z$, and consequently $d>z$. We have already
proved that $B\asymp1$, so there is an absolute constant $b_1>0$
such that $B\ge b_1$ for all sufficiently large $x$. Therefore
\[
 |a(d)|=\frac1{B\log d}
 \le\frac1{b_1\log z}
 =\frac1{6b_1\log x}\le1
 \qquad(d>1)
\]
once $x$ is sufficiently large.

Finally, separating the term $d=1$ in $A$ and using $P_*\mid P_0$,
we obtain
\[
 \begin{aligned}
 0\le A-1
 &=\frac1{B^2}\sum_{\substack{d\mid P_*\\d>1}}
                  \frac1{\varphi(d)(\log d)^2}\\
 &\le\frac1{B^2}\sum_{\substack{d\mid P_0\\d>1}}
                  \frac1{\varphi(d)(\log d)^2}
 \ll\frac1{(\log x)^2}.
 \end{aligned}
\]
The last estimate uses~\eqref{eq:full-estimates} and $B\asymp1$.
This proves~\eqref{eq:basic-coefficients}; in particular, $A\ge1$.

\medskip\noindent\textit{Estimates with a small power of the divisor.}
For $0\le\gamma\le2\beta$, we define
\[
 U_\gamma=\sum_{d\mid P_0}\frac{d^\gamma}{\varphi(d)}.
\]
The function $d^\gamma/\varphi(d)$ is multiplicative, and $P_0$ is
squarefree. Expanding its Euler product and then dividing by the
corresponding product for $U_0$, we find
\[
 \begin{aligned}
 U_\gamma&=\prod_{z<p\le y}
                    \left(1+\frac{p^\gamma}{p-1}\right),\\
 \frac{U_\gamma}{U_0}
 &=\prod_{z<p\le y}
     \frac{1+p^\gamma/(p-1)}{1+1/(p-1)}
 =\prod_{z<p\le y}\left(1+\frac{p^\gamma-1}{p}\right).
 \end{aligned}
\]
For every prime in this product, $0\le\gamma\log p\le2$.
The identity $e^v-1=\int_0^v e^s\,ds$ therefore gives
\[
 0\le p^\gamma-1
 =e^{\gamma\log p}-1
 \le e^2\gamma\log p.
\]
Applying $1+h\le e^h$ for $h\ge0$ to the Euler factors, and then
using Mertens' estimate, we obtain
\[
 \frac{U_\gamma}{U_0}
 \le\exp\!\left(\sum_{z<p\le y}\frac{p^\gamma-1}{p}\right)
 \le\exp\!\left(e^2\gamma\sum_{p\le y}\frac{\log p}{p}\right)
 \le\exp(C\gamma\log y)\ll1.
\]
Consequently, $U_\gamma\ll U_0\ll\log y/\log z$.

We next estimate the sum involving $|a(d)|$ by separating the
ranges $1<d\le y$ and $d>y$. In the first range, $d^\gamma\le e^2$,
and the definition of $B$ gives
\[
 \begin{aligned}
 \sum_{\substack{d\mid P_*\\1<d\le y}}
       \frac{|a(d)|d^\gamma}{\varphi(d)}
 &\le\frac{e^2}{B}
       \sum_{\substack{d\mid P_*\\1<d\le y}}
            \frac1{\varphi(d)\log d}\\
 &\le\frac{e^2}{B}
       \sum_{\substack{d\mid P_*\\d>1}}
            \frac1{\varphi(d)\log d}=e^2.
 \end{aligned}
\]
In the second range, $1/\log d\le1/\log y$. Since $P_*\mid P_0$,
we have
\[
 \begin{aligned}
 \sum_{\substack{d\mid P_*\\d>y}}
       \frac{|a(d)|d^\gamma}{\varphi(d)}
 &\le\frac1{B\log y}
       \sum_{d\mid P_*}\frac{d^\gamma}{\varphi(d)}\\
 &\le\frac{U_\gamma}{B\log y}
 \ll\frac1{\log z}\ll 1.
 \end{aligned}
\]
Adding the two estimates proves
\[
 \sum_{\substack{d\mid P_*\\d>1}}
       \frac{|a(d)|d^\gamma}{\varphi(d)}
 \ll 1.
\]

To control $A_\gamma$, observe that for $d\ge1$,
\[
 d^\gamma-1
 =\int_0^\gamma d^s\log d\,ds
 \le\gamma d^\gamma\log d.
\]
Also, for $d>1$, the definition of $a(d)$ implies
$a(d)^2\log d=|a(d)|/B$. It follows that
\[
 \begin{aligned}
 0\le A_\gamma-A
 &=\sum_{\substack{d\mid P_*\\d>1}}
       \frac{a(d)^2(d^\gamma-1)}{\varphi(d)}\\
 &\le\gamma\sum_{\substack{d\mid P_*\\d>1}}
       \frac{a(d)^2d^\gamma\log d}{\varphi(d)}\\
 &=\frac\gamma B\sum_{\substack{d\mid P_*\\d>1}}
       \frac{|a(d)|d^\gamma}{\varphi(d)}
 \ll\gamma.
 \end{aligned}
\]
This proves \eqref{eq:tilt} with an absolute constant $C$.
For later use, fix an absolute constant $C_1\ge1$ such that
$A_\gamma\le A+C_1\gamma$ throughout this range.

\medskip\noindent\textit{Divisors containing a prescribed prime.}
Fix a prime $p\mid P_*$. Every divisor $d\mid P_*$ with $p\mid d$
has a unique representation $d=pe$, where $e\mid P_*/p$ and
$(p,e)=1$. Separating $e=1$ and using
$\varphi(pe)=(p-1)\varphi(e)$, we obtain
\[
 \begin{aligned}
 \sum_{\substack{d\mid P_*\\p\mid d}}
       \frac{|a(d)|}{\varphi(d)}
 &=\frac1{B(p-1)}
   \sum_{e\mid P_*/p}\frac1{\varphi(e)\log(pe)}\\
 &=\frac1{B(p-1)}\left(
      \frac1{\log p}
      +\sum_{\substack{e\mid P_*/p\\e>1}}
          \frac1{\varphi(e)\log(pe)}\right).
 \end{aligned}
\]
For $e>1$, we have $\log(pe)\ge\log e$, while every divisor of
$P_*/p$ is also a divisor of $P_*$. Hence
\[
 \sum_{\substack{e\mid P_*/p\\e>1}}
       \frac1{\varphi(e)\log(pe)}
 \le\sum_{\substack{e\mid P_*\\e>1}}
       \frac1{\varphi(e)\log e}=B.
\]
Substitution proves \eqref{eq:prime-coefficients}.
Indeed, $B$ is bounded below by an absolute positive constant,
$\log p\ge\log z$, and $1/(p-1)\le2/p$.
\end{proof}

\subsection{Local factors and their moments}

For $p\mid P_*$ and $d\mid P_*$, we define
\[
 \psi_{i,p}(t)=\frac{1-p\ind_{p\mid L_i(t)}}{p-1},
 \qquad
 \psi_{i,d}(t)=\prod_{p\mid d}\psi_{i,p}(t),
 \qquad \psi_{i,1}(t)=1.
\]
For a $P_*$-periodic function $F$, write
\[
 \E F=\frac1{P_*}\sum_{t=0}^{P_*-1}F(t).
\]

\begin{lemma}[Local moments]\label{lem:local-factor-moments}
Under the complete average $\E$, the residue coordinates modulo
all distinct primes dividing $P_*$ are independent. The local
factors satisfy
\begin{equation}\label{eq:local-moments}
 \E\psi_{i,p}=0,\qquad
 \E\psi_{i,p}^2=\frac1{p-1},\qquad
 \E(\psi_{i,p}\psi_{j,p})=-\frac1{(p-1)^2}\quad(i\ne j).
\end{equation}
\end{lemma}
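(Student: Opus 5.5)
The plan is to use the Chinese remainder theorem for independence and then compute the three moments from the fact that $L_i(t)\equiv 0\pmod p$ has exactly one root modulo $p$.

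\emph{Independence.} Since $P_*$ is squarefree, the map $t\bmod P_*\mapsto (t\bmod p)_{p\mid P_*}$ is a bijection onto $\prod_{p\mid P_*}\cZ/p\cZ$ by the Chinese remainder theorem. So the uniform measure on $\cZ/P_*\cZ$ becomes the product of the uniform measures on the factors. The residues modulo distinct primes are therefore independent under $\E$. Moreover, each $\psi_{i,p}(t)$ depends only on $t\bmod p$, because $L_i(t)=b+Qt+s_i$ is a polynomial in $t$ with integer coefficients. Hence $\E$ of any function of $t\bmod p$ equals its average over $t\in\cZ/p\cZ$.

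\emph{Moments.} By Lemma~\ref{lem:exceptional-primes}, for $p\mid P_*$ each congruence $L_i(t)\equiv0\pmod p$ has exactly one root $r_i$ modulo $p$, and $r_i\ne r_j$ for $i\ne j$.

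1. On $\cZ/p\cZ$, $\psi_{i,p}$ equals $(1-p)/(p-1)=-1$ at the single point $r_i$ and $1/(p-1)$ at the other $p-1$ points. Thus
\[
\E\psi_{i,p}=\frac1p\Bigl(-1+(p-1)\cdot\tfrac1{p-1}\Bigr)=0.
\]

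2. For the second moment,
\[
\E\psi_{i,p}^2=\frac1p\Bigl(1+\frac{p-1}{(p-1)^2}\Bigr)=\frac1p\cdot\frac{p}{p-1}=\frac1{p-1}.
\]

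3. For $i\ne j$, the roots are distinct, so the product $\psi_{i,p}\psi_{j,p}$ takes the value $-1/(p-1)$ at $r_i$ and at $r_j$, and the value $1/(p-1)^2$ at the remaining $p-2$ points. Therefore
\[
\E(\psi_{i,p}\psi_{j,p})=\frac1p\Bigl(-\frac{2}{p-1}+\frac{p-2}{(p-1)^2}\Bigr)=\frac1p\cdot\frac{-2(p-1)+p-2}{(p-1)^2}=-\frac1{(p-1)^2}.
\]

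There is no real obstacle here. The only point needing care is the use of distinct roots in the cross moment, which is exactly why the primes dividing $\Delta$ were removed from $P_*$. Without that removal, a coincidence $r_i=r_j$ would make the cross term equal to $1/(p-1)$ instead of $-1/(p-1)^2$.
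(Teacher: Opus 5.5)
Your proposal is correct and follows essentially the same route as the paper. You use the Chinese remainder theorem to factor the complete average over distinct primes, the unique and (since $p\nmid\Delta$) pairwise distinct roots of the $L_i$ modulo $p$, and the same direct evaluation of the three local moments; your closing remark that a coincident root would turn the cross moment into $1/(p-1)$ is a correct check on why the primes dividing $\Delta$ were removed.
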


\begin{proof}
Fix a prime $p\mid P_*$. For any function $f$ that is periodic modulo
$p$, grouping the terms according to their residues modulo
$p$ gives
\[
 \begin{aligned}
 \E f
 &=\frac1{P_*}\sum_{t=0}^{P_*-1}f(t)\\
 &=\frac1{P_*}\sum_{r=0}^{p-1}
       \sum_{h=0}^{P_*/p-1}f(r+hp)\\
 &=\frac1{P_*}\sum_{r=0}^{p-1}
       \frac{P_*}{p}f(r)
 =\frac1p\sum_{r=0}^{p-1}f(r).
 \end{aligned}
\]
In particular, this identity applies to the local factors
$\psi_{i,p}$ and their products.

Let $f_p(t)$ be any function depending only
on $t\bmod p$. Since $P_*$ is squarefree, the Chinese remainder
theorem gives a bijection
\[
 \mathbb Z/P_*\mathbb Z
 \longrightarrow
 \prod_{p\mid P_*}\mathbb Z/p\mathbb Z,
 \qquad
 t\longmapsto(t\bmod p)_{p\mid P_*}.
\]
Hence the average of a product of such functions factors as
\[
 \begin{aligned}
 \E\prod_{p\mid P_*}f_p(t)
 &=\frac1{P_*}
   \sum_{\substack{0\le a_p<p\\\text{for each }p\mid P_*}}
      \prod_{p\mid P_*}f_p(a_p)\\
 &=\prod_{p\mid P_*}
      \left(\frac1p\sum_{a=0}^{p-1}f_p(a)\right)
 =\prod_{p\mid P_*}\E f_p(t).
 \end{aligned}
\]
Each function $\psi_{i,p}(t)$ depends only on $t\bmod p$.
Therefore, to average a product of these functions, we first
group together all factors associated with the same prime $p$,
and then factor the average over the distinct primes.
We now compute the local moments needed for these factorizations.

Since $(Q,p)=1$, multiplication by $Q$ permutes the residue
classes modulo $p$. Therefore the congruence
\[
 L_i(t)=b+Qt+s_i\equiv0\pmod p
\]
has exactly one solution modulo $p$. By the definition of
$\psi_{i,p}$,
\[
 \psi_{i,p}(t)=
 \begin{cases}
  -1,&p\mid L_i(t),\\[1mm]
  \dfrac1{p-1},&p\nmid L_i(t).
 \end{cases}
\]
Hence $\psi_{i,p}$ takes the value $-1$ on one residue class
modulo $p$, of weight $1/p$, and the value $1/(p-1)$ on the
other $p-1$ classes, of total weight $(p-1)/p$.
Direct calculation gives
\[
 \E\psi_{i,p}
 =\frac1p\left(-1+(p-1)\frac1{p-1}\right)=0
\]
and
\[
 \E\psi_{i,p}^2
 =\frac1p\left(1+(p-1)\frac1{(p-1)^2}\right)
 =\frac1{p-1}.
\]
If $i\ne j$, the roots of $L_i$ and $L_j$ modulo $p$ are distinct:
a common root would imply $p\mid s_i-s_j$, contrary to
$p\nmid\Delta$. On each of these two root classes,
$\psi_{i,p}\psi_{j,p}=-1/(p-1)$; on the remaining $p-2$ classes,
the product is $1/(p-1)^2$. Therefore
\[
 \E(\psi_{i,p}\psi_{j,p})
 =\frac1p\left(-\frac2{p-1}+\frac{p-2}{(p-1)^2}\right)
 =-\frac1{(p-1)^2}.
\]
We have proved the local moment identities in \eqref{eq:local-moments}.
The independence established above applies only to distinct prime
moduli. For a fixed prime $p$, the functions $\psi_{i,p}$ and
$\psi_{j,p}$ need not be independent. Indeed, for $i\ne j$,
\eqref{eq:local-moments} gives
\[
 \E(\psi_{i,p}\psi_{j,p})
 =-\frac1{(p-1)^2}
 \ne 0
 =(\E\psi_{i,p})(\E\psi_{j,p}).
\]
\end{proof}

\subsection{The truncated weight and finite averages}

For $0\le j\le k$, let
\[
 \mathcal D_j=\{(r_1,\ldots,r_j):r_i\mid P_*,\ (r_i,r_\ell)=1
 \ (i\ne\ell),\ r_1\cdots r_j\le D\},
\]
where $\mathcal D_0$ contains only the empty tuple, whose product
is $1$. The coprimality condition ensures that each prime occurs
in at most one coordinate, while the product condition bounds the
moduli that arise when the weight is expanded. We define
\[
 R(t)=\sum_{\bm r\in\mathcal D_k}
             \prod_{i=1}^k a(r_i)\psi_{i,r_i}(t),
 \qquad w(t)=R(t)^2,
\]
and
\begin{equation}\label{S_DEF}
 S_1=\frac1T\sum_{t=1}^T w(t),\qquad
 S_2=\frac1T\sum_{t=1}^T w(t)
                \sum_{i=1}^k\ind_{(L_i(t),P_*)=1}.
\end{equation}
We call $L_i(t)$ \emph{uncovered} if $(L_i(t),P_*)=1$. The inner sum in
$S_2$ is the number of uncovered forms, a nonnegative integer.
If it were at least $1$ for every $t$, the nonnegativity of $w(t)$
would imply $S_2\ge S_1$. Thus $S_2<S_1$ will prove that some
$1\le t\le T$ has no uncovered form.

For a squarefree divisor $r\mid P_*$, expanding the product defining
$\psi_{i,r}$ gives
\[
 \begin{aligned}
 \psi_{i,r}(t)
 &=\frac1{\varphi(r)}
           \prod_{p\mid r}(1-p\ind_{p\mid L_i(t)})\\
 &=\frac1{\varphi(r)}
           \sum_{d\mid r}(-1)^{\omega(d)}d\,
                       \ind_{d\mid L_i(t)}.
 \end{aligned}
\]
Here we used the fact that
\[
 \prod_{p|d}\ind_{p\mid L_i(t)} =
 \ind_{d\mid L_i(t)}.
\]
If $d_i\mid r_i$ for a tuple
$\bm r\in\mathcal D_k$, then the $d_i$ are pairwise coprime and
$d_1\cdots d_k\le r_1\cdots r_k\le D$. Hence, after collecting
coefficients, the expansion has the form
\begin{equation}\label{R_VIEW}
 R(t)=\sum_{\bm d\in\mathcal D_k}\lambda_{\bm d}
                         \prod_{i=1}^k\ind_{d_i\mid L_i(t)},
\end{equation}
where
\[
 \lambda_{\bm d}= \sum_{\substack{\bm r\in\mathcal D_k\\ d_i | r_i}}
 \prod_{i=1}^k \left(\frac{a(r_i)}{\varphi (r_i)} (-1)^{\omega (d_i)} d_i\right).
\]

\begin{lemma}[A finite-average comparison]\label{lem:finite-averages}
The coefficients in \eqref{R_VIEW} satisfy
\[
 \sum_{\bm d\in\mathcal D_k}|\lambda_{\bm d}|\le D^{3/2}.
\]
Furthermore, for every subset $\mathcal J\subseteq\mathcal D_k$,
the quadratic averages of the corresponding partial sum of
\eqref{R_VIEW}, taken over $1\le t\le T$ and modulo $P_*$,
differ by at most $D^3/T$. Both estimates are uniform in $b$
and in the allowed shifts.
\end{lemma}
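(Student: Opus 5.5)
Both estimates are crude bounds, and I would prove them directly from Lemma~\ref{lem:coefficient-bounds} and the size restriction built into $\mathcal D_k$.

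\textbf{The bound on $\sum|\lambda_{\bm d}|$.} I start from the definition of $\lambda_{\bm d}$ and apply the triangle inequality:
\[
 \sum_{\bm d\in\mathcal D_k}|\lambda_{\bm d}|
 \le\sum_{\bm r\in\mathcal D_k}\prod_{i=1}^k\frac{|a(r_i)|}{\varphi(r_i)}\sum_{d_i\mid r_i}d_i
 =\sum_{\bm r\in\mathcal D_k}\prod_{i=1}^k\frac{|a(r_i)|\,\sigma_1(r_i)}{\varphi(r_i)} .
\]
By \eqref{eq:basic-coefficients} we have $|a(r_i)|\le1$. Put $r=r_1\cdots r_k$. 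Since the $r_i$ are pairwise coprime and squarefree, the product of the factors $\sigma_1(r_i)/\varphi(r_i)$ equals $\sigma_1(r)/\varphi(r)$. For squarefree $r$ with all prime factors greater than $z$,
\[
 \frac{\sigma_1(r)}{\varphi(r)}=\prod_{p\mid r}\frac{p+1}{p-1}\le 2^{\omega(r)}\ll r^{o(1)}.
\]
A cruder bound also suffices: $\sigma_1(r)\le r\tau(r)$ and $r/\varphi(r)\le 2^{\omega(r)}$.

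Each tuple $\bm r\in\mathcal D_k$ with product $r$ corresponds to a way of distributing the primes of $r$ among the $k$ coordinates, so there are at most $k^{\omega(r)}$ such tuples. Summing over squarefree $r\le D$ gives
\[
 \sum_{\bm d\in\mathcal D_k}|\lambda_{\bm d}|\le\sum_{r\le D}(2k)^{\omega(r)}.
\]
Every prime factor of $r$ exceeds $z=x^6$, so $\omega(r)\le\log D/\log z=x/(24\log x)$. With $k\le\eta x\le x$ this yields
\[
 (2k)^{\omega(r)}\le\exp\Bigl(\frac{x\log(2x)}{24\log x}\Bigr)\le e^{x/20}=D^{1/5}.
\]
The total is therefore at most $D\cdot D^{1/5}\le D^{3/2}$. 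The only care needed is this counting of tuples; I expect it to be the one delicate point, and the prime lower bound $z$ makes it comfortable.

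\textbf{The comparison of averages.} Let $R_{\mathcal J}(t)$ denote the partial sum of \eqref{R_VIEW} over $\bm d\in\mathcal J$. Expanding its square gives
\[
 R_{\mathcal J}(t)^2=\sum_{\bm d,\bm e\in\mathcal J}\lambda_{\bm d}\lambda_{\bm e}\,\ind_{\{d_i\mid L_i(t),\ e_i\mid L_i(t)\ \forall i\}}.
\]
For a fixed pair $(\bm d,\bm e)$, the indicator depends on $t$ only modulo $m=\prod_i[d_i,e_i]$. Here I use that each $L_i$ has exactly one root modulo each prime $p\mid P_*$ and that $(Q,P_*)=1$, so the condition is a union of residue classes modulo $m$, where $m\mid P_*$ and $m\le D^2$.

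Let $\nu$ be the density of that set of residue classes. The number of $t\in[1,T]$ in the set lies in $[\nu T-m,\nu T+m]$, so the average over $1\le t\le T$ differs from $\nu$ by at most $m/T\le D^2/T$. Since $m\mid P_*$, the average over a full period modulo $P_*$ is exactly $\nu$. Summing these errors against $|\lambda_{\bm d}\lambda_{\bm e}|$ and using the first part, the difference between the two quadratic averages is at most
\[
 \frac{D^2}{T}\Bigl(\sum_{\bm d}|\lambda_{\bm d}|\Bigr)^2\le\frac{D^2\cdot D^3}{T}.
\]
This is $D^5/T$, not the claimed $D^3/T$, so I need to tighten the argument.

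\textbf{Tightening to $D^3/T$.} I would improve the first estimate to $\sum|\lambda_{\bm d}|\le D^{1/2}$ times a small factor. Tracking the divisors directly, $\sum_{d_i\mid r_i}d_i/\varphi(r_i)$ is bounded by $2^{\omega(r_i)}$ times $r_i/\varphi(r_i)$. This is only $\ll D^{o(1)}$ per tuple, but the number of tuples is still about $D$, so it does not help. Instead I will argue that a single residue class does the work: because each $L_i$ has exactly one root modulo each prime, the set of admissible $t$ is one residue class modulo $m$. Hence the count error is at most $1$, not $m$, and the error per pair is at most $1/T$. This gives a total error of at most $(\sum|\lambda_{\bm d}|)^2/T\le D^3/T$, as required. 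Both bounds use only the size of the moduli and the $k^{\omega}$ count, so they are uniform in $b$ and in the shifts.
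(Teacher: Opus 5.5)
Your argument is correct and is essentially the paper's: the triangle inequality with $|a(r_i)|\le1$, the count of $k^{\omega(n)}$ tuples with product $n$ together with $\omega(n)\le\log D/\log z$, and then the observation that each product $J_{\bm d}J_{\bm e}$ is the indicator of at most one residue class modulo a divisor of $P_*$, which gives an error of at most $1/T$ per pair and hence $D^3/T$ in total. One point to repair in the final step: the correct modulus is $q=\operatorname{lcm}(d_1,\ldots,d_k,e_1,\ldots,e_k)$, not $\prod_i[d_i,e_i]$. The latter is not squarefree, and does not divide $P_*$, when $(d_i,e_j)>1$ for some $i\ne j$. In that case the admissible set is empty: the paper deduces this from $p\nmid s_i-s_j$, and alternatively an intersection of residue classes is always empty or a single class modulo the lcm. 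So your $1/T$ bound per pair still holds.
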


\begin{proof}
We estimate the sum of absolute coefficients before collecting
equal indicator products. The triangle inequality gives
\[
 \begin{aligned}
 \sum_{\bm d\in\mathcal D_k}|\lambda_{\bm d}|
 &\le\sum_{\bm r\in\mathcal D_k}
       \prod_{i=1}^k\left(
          \frac{|a(r_i)|}{\varphi(r_i)}\sum_{d_i\mid r_i}d_i
       \right)\\
 &=\sum_{\bm r\in\mathcal D_k}
       \prod_{i=1}^k\frac{|a(r_i)|\sigma_1(r_i)}{\varphi(r_i)}\\
 &\le\sum_{\bm r\in\mathcal D_k}
       \frac{\sigma_1(r_1\cdots r_k)}{\varphi(r_1\cdots r_k)}.
 \end{aligned}
\]
The last step uses $|a(r_i)|\le1$ and the multiplicativity of
$\sigma_1$ and $\varphi$ on pairwise coprime arguments.

Fix a divisor $n\mid P_*$ with $n\le D$. We count the ordered
tuples $(r_1,\ldots,r_k)\in\mathcal D_k$ satisfying
$r_1\cdots r_k=n$. Suppose first that $n>1$. Since $P_*$
is squarefree, we may write
\[
 n=p_1\cdots p_\ell,\qquad \ell=\omega(n),
\]
where the primes $p_1,\ldots,p_\ell$ are distinct.

In every such tuple, each prime $p_j$ divides exactly one
coordinate. Indeed, it divides at least one coordinate because
$r_1\cdots r_k=n$, and it cannot divide two coordinates because
the coordinates are pairwise coprime. Thus the tuple determines
a map
\[
 f:\{1,\ldots,\ell\}\longrightarrow\{1,\ldots,k\},
\]
where $f(j)=i$ means that $p_j\mid r_i$.

Conversely, every such map determines a tuple by
\[
 r_i=\prod_{\substack{1\le j\le\ell\\f(j)=i}}p_j
 \qquad(1\le i\le k),
\]
with an empty product interpreted as $1$. These coordinates
are pairwise coprime divisors of $P_*$, and their product is
$n\le D$. Hence the resulting tuple belongs to $\mathcal D_k$.
The two constructions are inverse to each other, so this is
a bijection.

For each of the $\ell$ primes, there are $k$ independent
choices of its coordinate. Therefore
\[
 \#\{(r_1,\ldots,r_k)\in\mathcal D_k:r_1\cdots r_k=n\}
 =k^\ell=k^{\omega(n)}.
\]
For $n=1$, the only possible tuple is $(1,\ldots,1)$.
Since $\omega(1)=0$ and $k\ge1$, the same formula gives
$k^{\omega(1)}=1$.

It follows that
\[
 \sum_{\bm d\in\mathcal D_k}|\lambda_{\bm d}|
 \le\sum_{\substack{n\mid P_*\\n\le D}}
             k^{\omega(n)}\frac{\sigma_1(n)}{\varphi(n)}.
\]

We now bound both factors in each summand. All prime divisors of
$n$ exceed $z$, so
\begin{equation}\label{w(n).calc}
 \omega(n)\log z\le\sum_{p\mid n}\log p
 =\log n\le\log D.
\end{equation}
Since $n$ is squarefree and $p>z\ge3$, we also have
\[
 \begin{aligned}
 \frac{\sigma_1(n)}{\varphi(n)}
 &=\prod_{p\mid n}\frac{p+1}{p-1}
 =\prod_{p\mid n}\left(1+\frac2{p-1}\right)\\
 &\le\exp\!\left(\sum_{p\mid n}\frac2{p-1}\right)
 \le\exp\!\left(\frac{3\omega(n)}z\right)
 \le\exp\!\left(\frac{3\log D}{z\log z}\right)\le2
 \end{aligned}
\]
for sufficiently large $x$. The estimate includes $n=1$.
Furthermore, $k\ge1$, so
\[
 k^{\omega(n)}
 \le\exp\!\left(\frac{\log D\log k}{\log z}\right)
 =D^{\log k/\log z}.
\]
Since $k\le\eta x\le x$ and $\log z=6\log x$,
we have $\log k/\log z\le1/6$. Thus
\begin{equation}\label{eq:l1}
 \sum_{\bm d\in\mathcal D_k}|\lambda_{\bm d}|
 \le\sum_{\substack{n\mid P_*\\n\le D}}
             k^{\omega(n)}\frac{\sigma_1(n)}{\varphi(n)}
 \le2D^{1+\log k/\log z}\le D^{3/2}.
\end{equation}
The last inequality follows from $2D^{7/6}\le D^{3/2}$ for
$D\ge8$, which holds once $x$ is sufficiently large.

\medskip\noindent\textit{Comparison of the two averages.}
For $\bm d\in\mathcal D_k$, write
$J_{\bm d}(t)=\prod_{i=1}^k\ind_{d_i\mid L_i(t)}$.
Expanding the square of $R$ gives
\[
 R(t)^2=\sum_{\bm d,\bm e\in\mathcal D_k}
               \lambda_{\bm d}\lambda_{\bm e}
               J_{\bm d}(t)J_{\bm e}(t).
\]

Fix two tuples $\bm d,\bm e\in\mathcal D_k$. By definition,
\[
 J_{\bm d}(t)J_{\bm e}(t)
 =\prod_{i=1}^k
       \ind_{d_i\mid L_i(t)}\ind_{e_i\mid L_i(t)}.
\]
Thus this product equals $1$ if and only if all the conditions
\[
 d_i\mid L_i(t),\qquad e_i\mid L_i(t)
 \qquad(1\le i\le k)
\]
hold simultaneously. Put
\[
 q=\operatorname{lcm}(d_1,\ldots,d_k,e_1,\ldots,e_k),
\]
where $\operatorname{lcm}$ denotes the least common multiple.
Since all the coordinates divide $P_*$, we have $q\mid P_*$.

Suppose first that $(d_i,e_j)>1$ for some distinct indices
$i$ and $j$. Choose a prime $p\mid(d_i,e_j)$. If the product
$J_{\bm d}(t)J_{\bm e}(t)$ were equal to $1$, then
\[
 p\mid L_i(t),\qquad p\mid L_j(t),
\]
and consequently
\[
 p\mid L_i(t)-L_j(t)=s_i-s_j.
\]
This is impossible: $p\mid P_*$, whereas every prime divisor
of a difference $s_i-s_j$ has been excluded from $P_*$.
Therefore
\[
 J_{\bm d}(t)J_{\bm e}(t)=0
 \qquad\text{for every }t\in\cZ.
\]

Suppose now that
\[
 (d_i,e_j)=1\qquad\text{for all }i\ne j.
\]
For each $i$, define
\[
 m_i=\operatorname{lcm}(d_i,e_i).
\]
The two divisibility conditions for the $i$th form are
equivalent to a single condition:
\[
 d_i\mid L_i(t)\ \text{and}\ e_i\mid L_i(t)
 \quad\Longleftrightarrow\quad
 m_i\mid L_i(t).
\]
Moreover, since $\bm d,\bm e\in\mathcal D_k$, their coordinates
are pairwise coprime within each tuple. Together with the
assumed cross-coprimality, this gives
\[
 (d_i,d_j)=(e_i,e_j)=(d_i,e_j)=(e_i,d_j)=1
 \qquad(i\ne j).
\]
Hence $(m_i,m_j)=1$ whenever $i\ne j$, and therefore
\[
 m_1\cdots m_k
 =\operatorname{lcm}(m_1,\ldots,m_k)
 =q.
\]

Each $m_i$ divides $P_*$, so $(Q,m_i)=1$. It follows that
the congruence
\[
 b+Qt+s_i\equiv0\pmod{m_i}
\]
determines exactly one residue class modulo $m_i$.
When $m_i=1$, this condition is satisfied by every integer.
Since the moduli $m_i$ are pairwise coprime, the Chinese
remainder theorem shows that the entire system is equivalent
to one congruence
\[
 t\equiv a\pmod q
\]
for a uniquely determined residue class $a\bmod q$.
Consequently,
\[
 J_{\bm d}(t)J_{\bm e}(t)=
 \begin{cases}
  0,
  &\text{if }(d_i,e_j)>1\text{ for some }i\ne j,\\[1mm]
  \ind_{t\equiv a\pmod q},
  &\text{if }(d_i,e_j)=1\text{ for all }i\ne j.
 \end{cases}
\]
If $q=1$, the indicator in the second case is identically $1$.

We now compare the complete average with the average over
$1\le t\le T$. In the first case, both averages vanish.
In the second case, since $q\mid P_*$, exactly $P_*/q$
integers in $\{0,\ldots,P_*-1\}$ belong to the class
$a\bmod q$. Thus
\[
 \E(J_{\bm d}J_{\bm e})
 =\frac1{P_*}
   \#\{0\le t<P_*:t\equiv a\pmod q\}
 =\frac1q.
\]

To estimate the number of representatives in the shorter
interval, write
\[
 T=\ell q+h,\qquad 0\le h<q.
\]
Each of the $\ell$ complete blocks of $q$ consecutive
integers contains exactly one representative of $a\bmod q$,
and the remaining block contains at most one. Hence
\[
 \#\{1\le t\le T:t\equiv a\pmod q\}
 \in\{\ell,\ell+1\}.
\]
Since $T/q=\ell+h/q$, we obtain
\[
 \left|
 \#\{1\le t\le T:t\equiv a\pmod q\}-\frac Tq
 \right|\le1.
\]
Dividing by $T$ proves
\[
 \left|
 \frac1T\sum_{t=1}^T J_{\bm d}(t)J_{\bm e}(t)
 -\E(J_{\bm d}J_{\bm e})
 \right|\le\frac1T.
\]
This estimate therefore holds in both cases, for every pair
$\bm d,\bm e\in\mathcal D_k$, uniformly in $b$ and the shifts.

Applying the triangle inequality to the expansion of $R^2$ and
using \eqref{S_DEF} and \eqref{eq:l1}, we obtain
\begin{equation}\label{eq:average-error}
 \begin{aligned}
 |S_1-\E R^2|
 &\le\frac1T\sum_{\bm d,\bm e}
                  |\lambda_{\bm d}|\,|\lambda_{\bm e}|\\
 &=\frac1T\left(\sum_{\bm d}|\lambda_{\bm d}|\right)^2
 \le\frac{D^3}{T}.
 \end{aligned}
\end{equation}

The same argument applies if any subset of the terms in
\eqref{eq:average-error} is retained. More precisely, for
$\mathcal J\subseteq\mathcal D_k$, set
\[
 R_{\mathcal J}(t)=\sum_{\bm d\in\mathcal J}
                                     \lambda_{\bm d}J_{\bm d}(t).
\]
Every retained pair has the same error bound, and
$\sum_{\bm d\in\mathcal J}|\lambda_{\bm d}|
 \le\sum_{\bm d\in\mathcal D_k}|\lambda_{\bm d}|$. Hence
\begin{equation}\label{R_J}
 \left|\frac1T\sum_{t=1}^TR_{\mathcal J}(t)^2
                          -\E R_{\mathcal J}^2\right|
 \le\frac{1}T
       \left(\sum_{\bm d\in\mathcal J}|\lambda_{\bm d}|\right)^2
 \le\frac{D^3}T.
\end{equation}
We shall use this bound for majorants of uncovered positions.
\end{proof}

\subsection{Correlations between the forms}

\begin{lemma}[A second-moment estimate]\label{lem:gram-estimate}
For arbitrary real coefficients $c_{\bm r}$ on $\mathcal D_j$,
where $0\le j\le k$, put
\[
 F(t)=\sum_{\bm r\in\mathcal D_j}c_{\bm r}
                           \prod_{i=1}^j\psi_{i,r_i}(t),
 \qquad
 I=\sum_{\bm r\in\mathcal D_j}
                           \frac{c_{\bm r}^2}
                                {\prod_{i=1}^j\varphi(r_i)}.
\]
Then
\begin{equation}\label{eq:gram}
 |\E F^2-I|\ll\frac{k\log D}{z}\,I.
\end{equation}
\end{lemma}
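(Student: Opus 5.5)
The plan is to expand $\E F^2$ as a double sum over pairs $\bm r,\bm s\in\mathcal D_j$ and compute each correlation exactly via Lemma~\ref{lem:local-factor-moments}. The diagonal $\bm r=\bm s$ should reproduce $I$, and the off-diagonal terms should be controlled by a Schur-type bound. Write
\[
 \E F^2=\sum_{\bm r,\bm s\in\mathcal D_j}c_{\bm r}c_{\bm s}\,M(\bm r,\bm s),
 \qquad
 M(\bm r,\bm s)=\E\prod_{i=1}^j\psi_{i,r_i}\psi_{i,s_i}.
\]
Each $\psi_{i,r_i}$ is a product of local factors $\psi_{i,p}$ over $p\mid r_i$. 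Within a tuple the coordinates are pairwise coprime, so each prime $p$ occurs in at most one $r_i$ and at most one $s_\ell$. Grouping the factors by prime and using the independence over distinct primes from Lemma~\ref{lem:local-factor-moments}, $M(\bm r,\bm s)$ factors as a product over primes $p\mid \operatorname{lcm}(r_1\cdots r_j,\,s_1\cdots s_j)$.

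By \eqref{eq:local-moments}, each local factor takes one of three values:
\begin{itemize}
\item $0$ if $p$ occurs in only one of the two tuples;
\item $\E\psi_{i,p}^2=1/(p-1)$ if $p\mid r_i$ and $p\mid s_i$ for the same $i$;
\item $\E\psi_{i,p}\psi_{\ell,p}=-1/(p-1)^2$ if $p\mid r_i$ and $p\mid s_\ell$ with $i\ne\ell$.
\end{itemize}
Hence $M(\bm r,\bm s)=0$ unless $r_1\cdots r_j=s_1\cdots s_j=:n$. In that case, as in the proof of Lemma~\ref{lem:finite-averages}, the tuples correspond to maps $f,g$ from the primes of $n$ to $\{1,\ldots,j\}$. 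Let $E=\{p\mid n: f(p)\ne g(p)\}$. Then, using $\prod_i\varphi(r_i)=\prod_i\varphi(s_i)=\varphi(n)$,
\[
 |M(\bm r,\bm s)|=\frac1{\varphi(n)}\prod_{p\in E}\frac1{p-1}.
\]
When $\bm r=\bm s$ we have $E=\varnothing$ and the diagonal contribution is exactly $I$.

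For the off-diagonal part, use $|c_{\bm r}c_{\bm s}|\le\frac12(c_{\bm r}^2+c_{\bm s}^2)$ and symmetry. This gives
\[
 |\E F^2-I|\le\sum_{\bm r}\frac{c_{\bm r}^2}{\varphi(n_{\bm r})}
 \sum_{\substack{\bm s\ne\bm r\\ \prod s_i=n_{\bm r}}}\prod_{p\in E(\bm r,\bm s)}\frac1{p-1}.
\]
For fixed $\bm r$, each $\bm s$ is determined by a nonempty set $E$ of primes of $n$ together with new coordinates for those primes, at most $j-1\le k$ choices each. Dropping the constraint $\bm s\in\mathcal D_j$ only enlarges the sum, so the inner sum is at most
\[
 \prod_{p\mid n}\Bigl(1+\frac{k}{p-1}\Bigr)-1\le\exp\Bigl(\frac{2k\,\omega(n)}{z}\Bigr)-1.
\]
By \eqref{w(n).calc}, $\omega(n)\le\log D/\log z$, and $k\log D/z\le x^2/x^6$ is tiny. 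So the exponent is $o(1)$ and the bound is $\ll k\log D/(z\log z)\le k\log D/z$. Since $\prod_i\varphi(r_i)=\varphi(n_{\bm r})$, the outer sum is exactly $I$, which yields \eqref{eq:gram}.

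The main point needing care is the combinatorial bookkeeping in the factorization step. One must check that any prime appearing in only one tuple kills the term because $\E\psi_{i,p}=0$, and that the case $j<k$ causes no difficulty. The cross moment for $i\ne\ell$ relies on distinct roots modulo $p$, which holds because $p\mid P_*$ (Lemma~\ref{lem:exceptional-primes}).
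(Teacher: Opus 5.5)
Your proposal is correct and follows essentially the same argument as the paper. You factor the correlations prime by prime via the local moments, note that they vanish unless $n(\bm r)=n(\bm s)$, parametrize the tuples with a given product by maps from the primes of $n$ to the coordinates, and close with a Schur-type bound whose row sums are $\prod_{p\mid n}\bigl(1+(j-1)/(p-1)\bigr)-1\ll k\log D/z$. The only cosmetic difference is that the paper normalizes by $\Psi_{\bm r}=\sqrt{\varphi(n(\bm r))}\prod_i\psi_{i,r_i}$ so that the diagonal entries are $1$, whereas you work unnormalized and use that $\varphi(n_{\bm r})=\varphi(n_{\bm s})$ on the support of the correlation matrix, which amounts to the same thing.
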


\begin{proof}
For $j=0$, $F$ is a constant, so $\E F^2=I$ and there is nothing
to prove. Assume that $1\le j\le k$.

For $\bm r\in\mathcal D_j$, write $n(\bm r)=r_1\cdots r_j$.
The coordinates are pairwise coprime, so this product is
squarefree and
$\varphi(n(\bm r))=\prod_i\varphi(r_i)$. Introduce the normalized
functions and coefficients
\[
 \Psi_{\bm r}(t)
   =\sqrt{\varphi(n(\bm r))}\prod_{i=1}^j\psi_{i,r_i}(t),
 \qquad
 b_{\bm r}=\frac{c_{\bm r}}{\sqrt{\varphi(n(\bm r))}}.
\]
Then $F=\sum_{\bm r}b_{\bm r}\Psi_{\bm r}$ and
$I=\sum_{\bm r}b_{\bm r}^2$. By the independence of the residue
coordinates $t\bmod p$ for distinct primes $p\mid P_*$,
established above, and by \eqref{eq:local-moments}, we obtain
\[
 \E\Psi_{\bm r}^2
 =\varphi(n(\bm r))
    \prod_{p\mid n(\bm r)}\frac1{p-1}=1.
\]
Thus $I$ is exactly the diagonal part of the expansion of
$\E F^2$. Let $M_{\bm r,\bm s}=\E(\Psi_{\bm r}\Psi_{\bm s})$ be the
matrix of normalized inner products. We obtain
\begin{equation}\label{E_F}
\E F^2-I = \sum_{\bm r \neq \bm s}M_{\bm r,\bm s} b_{\bm r}b_{\bm s}.
\end{equation}

If $n(\bm r)\ne n(\bm s)$,
some prime divides precisely one of these two squarefree
products. At that prime the local average is $\E\psi_{i,p}=0$,
so independence implies $M_{\bm r,\bm s}=0$.
It remains to consider tuples with the same product $n$.

Fix a tuple $\bm r\in\mathcal D_j$ and put
$n=n(\bm r)$. For each prime $p\mid n$, let
$\iota_{\bm r}(p)$ denote the unique index $i$ such that
$p\mid r_i$. This index is unique because the coordinates
of $\bm r$ are pairwise coprime.

Now consider another tuple $\bm s\in\mathcal D_j$ with
$n(\bm s)=n$, and define $\iota_{\bm s}(p)$ similarly.
The normalized functions can be written as
\[
 \Psi_{\bm r}(t)
 =
 \prod_{p\mid n}
 \bigl(\sqrt{p-1}\,\psi_{\iota_{\bm r}(p),p}(t)\bigr),
\]
and likewise for $\Psi_{\bm s}$. By independence across
distinct prime moduli,
\[
 M_{\bm r,\bm s}
 =
 \prod_{p\mid n}
 \left(
 (p-1)\,
 \E\bigl(
 \psi_{\iota_{\bm r}(p),p}
 \psi_{\iota_{\bm s}(p),p}
 \bigr)
 \right).
\]
For each fixed $p\mid n$, the local factor is $1$ if
$\iota_{\bm s}(p)=\iota_{\bm r}(p)$, and it is
$-1/(p-1)$ otherwise, by \eqref{eq:local-moments}. Define
\[
 h_p(\ell)=
 \begin{cases}
  1,&\ell=\iota_{\bm r}(p),\\[1mm]
  \dfrac1{p-1},&\ell\ne\iota_{\bm r}(p),
 \end{cases}
 \qquad 1\le\ell\le j.
\]
Taking absolute values in the preceding product gives
\[
 |M_{\bm r,\bm s}|
 =
 \prod_{p\mid n}h_p\bigl(\iota_{\bm s}(p)\bigr).
\]

We next describe precisely the tuples over which we sum.
For each prime $p\mid n$, choose an index
$f(p)\in\{1,\ldots,j\}$, and set
\[
 s_\ell=
 \prod_{\substack{p\mid n\\f(p)=\ell}}p,
 \qquad 1\le\ell\le j,
\]
where an empty product is understood to be $1$.
Every such choice produces a tuple in $\mathcal D_j$:
each $s_\ell$ divides $P_*$, the coordinates are pairwise
coprime, and their product is $n\le D$.
Conversely, every tuple $\bm s\in\mathcal D_j$ with
$n(\bm s)=n$ arises from exactly one such choice,
namely $f(p)=\iota_{\bm s}(p)$.

Thus the coordinate assigned to each prime can be chosen
freely, with no further restriction linking the choices
for different primes. Summing over all tuples with
product $n$ is therefore equivalent to summing over
all maps
\[
 f:\{p\in\PP:p\mid n\}\longrightarrow\{1,\ldots,j\}.
\]
By the distributive law for finite sums and products,
\[
 \begin{aligned}
 \sum_{\substack{\bm s\in\mathcal D_j\\n(\bm s)=n}}
       |M_{\bm r,\bm s}|
 &=
 \sum_{f:\{p\in\PP:p\mid n\}\to\{1,\ldots,j\}}
       \prod_{p\mid n}h_p(f(p))\\
 &=
 \prod_{p\mid n}
       \left(\sum_{\ell=1}^j h_p(\ell)\right)\\
 &=
 \prod_{p\mid n}
       \left(1+\frac{j-1}{p-1}\right).
 \end{aligned}
\]
Indeed, for each prime $p$, exactly one choice of $\ell$
agrees with $\iota_{\bm r}(p)$ and contributes $1$;
the remaining $j-1$ choices each contribute $1/(p-1)$.

This sum includes the diagonal entry
$|M_{\bm r,\bm r}|=1$, corresponding to the choice
$f(p)=\iota_{\bm r}(p)$ for every $p\mid n$.
Moreover, we have already shown that
$M_{\bm r,\bm s}=0$ whenever $n(\bm s)\ne n$.
Hence removing the diagonal entry gives the full
off-diagonal row sum:
\[
 \sum_{\substack{\bm s\in\mathcal D_j\\\bm s\ne\bm r}}
       |M_{\bm r,\bm s}|
 =
 \prod_{p\mid n}
       \left(1+\frac{j-1}{p-1}\right)-1.
\]
We note that this formula also holds for $n=1$, since both sides are $0$. Otherwise, $p>z$ and
$\omega(n)\le\log D/\log z$ (see \eqref{w(n).calc}) imply
\[
 \sum_{p\mid n}\frac{j-1}{p-1}
 \le\frac{2k\omega(n)}z
 \le\frac{2k\log D}{z\log z}
 \ll\frac{k\log D}{z}=O(x^{-4}).
\]
Here we used $k\le x$, $\log D=x/4$, and $z=x^6$.
Applying $1+h\le e^h$ and then $e^v-1\ll v$ for $0\le v\le1$,
we conclude that the absolute off-diagonal row sums satisfy
\[
 \rho:=\max_{\bm r\in\mathcal D_j}
       \sum_{\substack{\bm s\in\mathcal D_j\\\bm s\ne\bm r}}
       |M_{\bm r,\bm s}|
 \ll\frac{k\log D}{z}.
\]

Finally, $M$ is symmetric. Using \eqref{E_F} and $2|ab|\le a^2+b^2$, we obtain
\[
 \begin{aligned}
 |\E F^2-I|
 &\le\sum_{\bm r\ne\bm s}
         |M_{\bm r,\bm s}|\,|b_{\bm r}b_{\bm s}|\\
 &\le\frac12\sum_{\bm r\ne\bm s}
         |M_{\bm r,\bm s}|(b_{\bm r}^2+b_{\bm s}^2)\\
 &=\sum_{\bm r}b_{\bm r}^2
       \sum_{\bm s\ne\bm r}|M_{\bm r,\bm s}|\\
 &\le\rho\sum_{\bm r}b_{\bm r}^2
 \ll\frac{k\log D}{z}\,I.
 \end{aligned}
\]
This proves \eqref{eq:gram} for arbitrary real coefficients
$c_{\bm r}$.
\end{proof}

\subsection{The diagonal mass and total weight}

\begin{lemma}[The total weight]\label{lem:total-weight}
With $S_1$ defined by \eqref{S_DEF}, we have
\[
 S_1=(1+o(1))A^k>0
\]
uniformly for $1\le k\le\eta x$, provided the absolute
constant $\eta$ is sufficiently small.
\end{lemma}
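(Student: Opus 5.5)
Our plan is to pass from the average over $1\le t\le T$ to the complete average $\E$, and then to compare $\E R^2$ with the diagonal mass, which factors as $A$-type sums. By Lemma~\ref{lem:finite-averages}, $|S_1-\E R^2|\le D^3/T=e^{3x/4}/\lfloor e^x\rfloor\ll e^{-x/4}$. Since $A\ge1$, we have $A^k\ge1$, so this error is $o(A^k)$. It therefore suffices to show $\E R^2=(1+o(1))A^k$.

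Apply Lemma~\ref{lem:gram-estimate} with $j=k$ and $c_{\bm r}=\prod_i a(r_i)$. This gives
\[
\E R^2=(1+O(k\log D/z))\,I,\qquad I=\sum_{\bm r\in\mathcal D_k}\prod_{i=1}^k\frac{a(r_i)^2}{\varphi(r_i)} .
\]
Here $k\log D/z\ll x^{-4}$. Everything now reduces to showing $I=(1+o(1))A^k$.

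For the upper bound, drop the constraints $r_1\cdots r_k\le D$ and pairwise coprimality; since all terms are nonnegative, $I\le A^k$. For the lower bound, first remove the coprimality condition. The untruncated sum over coprime tuples is at least $A^k$ minus the contribution of tuples in which some prime $p$ divides $r_i$ and $r_\ell$ with $i\ne\ell$. Using $a(d)^2\le |a(d)|$ (since $|a(d)|\le1$) and \eqref{eq:prime-coefficients}, that contribution is bounded by
\[
\binom k2 A^{k-2}\sum_{p\mid P_*}\Bigl(\sum_{p\mid d}\frac{|a(d)|}{\varphi(d)}\Bigr)^2\ll k^2A^{k-2}\sum_{p>z}\frac1{p^2}\ll \frac{k^2}{z}A^k=o(A^k).
\]

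The truncation $r_1\cdots r_k\le D$ is the main obstacle. We handle it with Rankin's trick using the tilted sums $A_\gamma$ from \eqref{eq:tilt}. Taking $\gamma=2\beta=2/\log y$, the tail satisfies
\[
\sum_{\bm r:\,r_1\cdots r_k>D}\prod_i\frac{a(r_i)^2}{\varphi(r_i)}\le D^{-\gamma}A_\gamma^k\le D^{-\gamma}(A+C_1\gamma)^k\le A^k\exp\bigl(-\gamma\log D+C_1\gamma k\bigr),
\]
where the last step uses $A\ge1$. Now $\gamma\log D=2u=\tfrac12(\log x)^5\to\infty$, while $C_1\gamma k\le 2C_1\eta x/\log y=2C_1\eta(\log x)^5$. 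Choosing $\eta$ small enough that $2C_1\eta<1/4$, the exponent is at most $-\tfrac14(\log x)^5\to-\infty$, so the tail is $o(A^k)$. This is exactly where the smallness of $\eta$ enters, and checking that the exponent bookkeeping (the $\log y$ versus $x$ scales) really yields a negative multiple of $u$ is the delicate point.

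Combining these estimates gives $I=(1+o(1))A^k$, hence $\E R^2=(1+o(1))A^k$ and $S_1=(1+o(1))A^k>0$, uniformly in $1\le k\le\eta x$, $b$, and the shifts.
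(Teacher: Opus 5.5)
Your proposal is correct and follows the paper's proof: it uses the same three comparisons, $S_1\approx\E R^2$ (Lemma~\ref{lem:finite-averages}), $\E R^2\approx I$ (Lemma~\ref{lem:gram-estimate}), and $I\approx A^k$ (the non-coprimality bound $\ll k^2A^k/z$ together with Rankin's trick), only in reverse order. The one difference is that you tilt with $\gamma=2\beta$ instead of the paper's $\gamma=\beta$; this is permitted by \eqref{eq:tilt} and leads to the same smallness condition $\eta<1/(8C_1)$.
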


\begin{proof}
For the coefficients of $R$, put
\[
 I_R=\sum_{\bm r\in\mathcal D_k}
                 \prod_{i=1}^k\frac{a(r_i)^2}{\varphi(r_i)}.
\]
Our aim is to prove that
\[
 I_R=(1+o(1))A^k.
\]
We first compute the larger sum obtained by
removing both the pairwise coprimality condition and
the restriction $r_1\cdots r_k\le D$.

In this unrestricted sum, each coordinate $r_i$ ranges
freely over all divisors of $P_*$. Since the $i$th factor
depends only on $r_i$, the sum factors as
\[
 \begin{aligned}
 \sum_{r_1\mid P_*}\cdots\sum_{r_k\mid P_*}
       \prod_{i=1}^k\frac{a(r_i)^2}{\varphi(r_i)}
 &=
 \prod_{i=1}^k
       \left(
       \sum_{r_i\mid P_*}
       \frac{a(r_i)^2}{\varphi(r_i)}
       \right)\\
 &=A^k,
 \end{aligned}
\]
where the last equality follows from the definition of $A$.

The sum defining $I_R$ contains only the tuples belonging
to $\mathcal D_k$. Thus $I_R$ is obtained from the
unrestricted sum by discarding some nonnegative terms.
In particular,
\[
 \begin{aligned}
 0\le A^k-I_R
 &=
 \sum_{\substack{r_i\mid P_*\ (1\le i\le k)\\
                  (r_1,\ldots,r_k)\notin\mathcal D_k}}
       \prod_{i=1}^k\frac{a(r_i)^2}{\varphi(r_i)}.
 \end{aligned}
\]

A tuple is discarded if its product exceeds $D$, or if
two of its coordinates have a common prime divisor.
We estimate separately the total contribution of each
of these two types of excluded tuples.
A tuple violating both conditions may be counted twice;
this is harmless because we seek an upper bound and
all summands are nonnegative.

It therefore suffices to show that each of these two
contributions is $o(A^k)$. This will imply that
$A^k-I_R=o(A^k)$, and hence that $I_R/A^k\to1$.

First, for $r_1\cdots r_k>D$ and $\beta>0$, we have
$1\le(r_1\cdots r_k/D)^\beta$. Thus the mass beyond the cutoff
is at most
\[
 \begin{aligned}
 &\sum_{\substack{r_i\mid P_*\ (1\le i\le k)\\
                   r_1\cdots r_k>D}}
              \prod_{i=1}^k\frac{a(r_i)^2}{\varphi(r_i)}\\
 &\qquad\le D^{-\beta}
       \sum_{r_1\mid P_*}\cdots\sum_{r_k\mid P_*}
              \prod_{i=1}^k\frac{a(r_i)^2r_i^\beta}{\varphi(r_i)}
       =D^{-\beta}A_\beta^k.
 \end{aligned}
\]
Since $A\ge1$ and $A_\beta\le A+C_1\beta$, we have
\[
 A_\beta^k
 \le A^k\left(1+\frac{C_1\beta}{A}\right)^k
 \le A^k e^{C_1k\beta}.
\]
We now choose the constant in the lemma so that
\[
 0<\eta\le\min\left(1,\frac1{8C_1}\right).
\]
This ensures
$C_1k\le C_1\eta x\le x/8=\tfrac12\log D$.
Because $u=\beta\log D$, it follows that $C_{1}k\beta \le u/2$ and
\begin{equation}\label{eq:tail}
 D^{-\beta}A_\beta^k
 \le A^k\exp(-u+C_1k\beta)
 \le A^k e^{-u/2}.
\end{equation}

Second, we estimate the contribution of tuples whose
coordinates are not pairwise coprime. If $k=1$, there
are no pairs of distinct coordinates, so this
contribution is zero. We therefore assume that $k\ge2$.

If a tuple $(r_1,\ldots,r_k)$ fails the pairwise
coprimality condition, then $(r_i,r_j)>1$ for some
indices $i<j$. Hence there is a prime $p$ such that
\[
 p\mid r_i,\qquad p\mid r_j.
\]
Since every coordinate divides $P_*$, this prime
also divides $P_*$.

For each prime $p\mid P_*$, put
\[
 B_p=
 \sum_{\substack{d\mid P_*\\p\mid d}}
       \frac{a(d)^2}{\varphi(d)}.
\]
Fix a pair $i<j$ and a prime $p\mid P_*$. We have
\[
 \begin{aligned}
 &\sum_{\substack{r_\ell\mid P_*\ (1\le\ell\le k)\\
                  p\mid r_i,\ p\mid r_j}}
       \prod_{\ell=1}^k
       \frac{a(r_\ell)^2}{\varphi(r_\ell)}
 \\
 &\qquad=
 \left(
 \sum_{\substack{r_i\mid P_*\\p\mid r_i}}
       \frac{a(r_i)^2}{\varphi(r_i)}
 \right)
 \left(
 \sum_{\substack{r_j\mid P_*\\p\mid r_j}}
       \frac{a(r_j)^2}{\varphi(r_j)}
 \right)
 \prod_{\substack{1\le\ell\le k\\\ell\ne i,j}}
 \left(
 \sum_{r_\ell\mid P_*}
       \frac{a(r_\ell)^2}{\varphi(r_\ell)}
 \right)
 \\
 &\qquad=B_p^2A^{k-2}.
 \end{aligned}
\]

Every tuple failing pairwise coprimality is included
for at least one pair $(i,j)$ and at least one prime
$p\mid P_*$. A tuple may be included several times,
but all weights are nonnegative, so summing over
all pairs and primes gives an upper bound:
\[
 \begin{aligned}
 &\sum_{\substack{r_\ell\mid P_*\ (1\le\ell\le k)\\
          (r_i,r_j)>1\ \text{for some }i<j}}
       \prod_{\ell=1}^k
       \frac{a(r_\ell)^2}{\varphi(r_\ell)}\le
       \sum_{1\le i < j \le k} \sum_{p|P_{*}}
       \sum_{\substack{r_\ell\mid P_*\ (1\le\ell\le k)\\
                  p\mid r_i,\ p\mid r_j}}
       \prod_{\ell=1}^k
       \frac{a(r_\ell)^2}{\varphi(r_\ell)}
 \\
 &\qquad\le
 \sum_{1\le i<j\le k}\sum_{p\mid P_*}
       A^{k-2}B_p^2
 =
 \binom{k}{2}A^{k-2}\sum_{p\mid P_*}B_p^2.
 \end{aligned}
\]

It remains to bound $B_p$. By
\eqref{eq:basic-coefficients}, we have $|a(d)|\le1$,
and therefore $a(d)^2\le|a(d)|$. Applying
\eqref{eq:prime-coefficients}, we obtain
\[
 B_p
 \le
 \sum_{\substack{d\mid P_*\\p\mid d}}
       \frac{|a(d)|}{\varphi(d)}
 \ll\frac1p.
\]
All prime divisors of $P_*$ exceed $z$. Hence
\[
 \sum_{p\mid P_*}B_p^2
 \ll
 \sum_{p\mid P_*}\frac1{p^2}
 \le
 \sum_{\substack{n\in\mathbb N\\n>z}}\frac1{n^2}
 \ll\frac1z.
\]
For completeness, the last estimate follows from
\[
 \sum_{\substack{n\in\mathbb N\\n>z}}\frac1{n^2}
 \le
 \int_{\lfloor z\rfloor}^{\infty}\frac{dt}{t^2}
 =
 \frac1{\lfloor z\rfloor}
 \le\frac2z
 \qquad(z\ge2).
\]
Substituting these bounds and using $A\ge1$, we conclude
that
\[
 \sum_{\substack{r_\ell\mid P_*\ (1\le\ell\le k)\\
          (r_i,r_j)>1\ \text{for some }i<j}}
       \prod_{\ell=1}^k
       \frac{a(r_\ell)^2}{\varphi(r_\ell)}
 \ll
 \frac{k^2}{z}A^{k-2}
 \le
 \frac{k^2}{z}A^k.
\]
Thus, the total weight of all tuples with
$(r_i,r_j)>1$ for some $i<j$ is $O(k^2A^k/z)$.

Combining the tail bound~\eqref{eq:tail} with the preceding
coprimality bound, we have
\[
 0\le A^k-I_R
 \ll A^k\left(e^{-u/2}+\frac{k^2}{z}\right).
\]
Here $u=\tfrac14(\log x)^5\to\infty$ and
$k^2/z\le x^2/x^6=x^{-4}\to0$. Hence
\[
 I_R=A^k\left(1+O\!\left(e^{-u/2}+\frac{k^2}{z}\right)\right)
 =(1+o(1))A^k.
\]
We now apply \eqref{eq:gram} to the function $R$.
In the general definition of $F$, take
\[
 j=k,
 \qquad
 c_{\bm r}=\prod_{i=1}^k a(r_i)
 \quad(\bm r\in\mathcal D_k).
\]
With this choice,
\[
 \begin{aligned}
 F(t)
 &=
 \sum_{\bm r\in\mathcal D_k}
 c_{\bm r}\prod_{i=1}^k\psi_{i,r_i}(t)\\
 &=
 \sum_{\bm r\in\mathcal D_k}
 \prod_{i=1}^k
       \bigl(a(r_i)\psi_{i,r_i}(t)\bigr)
 =R(t).
 \end{aligned}
\]
Moreover, the corresponding diagonal sum is precisely
$I_R$, since
\[
 \begin{aligned}
 I
 &=
 \sum_{\bm r\in\mathcal D_k}
 \frac{c_{\bm r}^{\,2}}
      {\prod_{i=1}^k\varphi(r_i)}\\
 &=
 \sum_{\bm r\in\mathcal D_k}
 \frac{\left(\prod_{i=1}^k a(r_i)\right)^2}
      {\prod_{i=1}^k\varphi(r_i)}\\
 &=
 \sum_{\bm r\in\mathcal D_k}
 \prod_{i=1}^k\frac{a(r_i)^2}{\varphi(r_i)}
 =I_R.
 \end{aligned}
\]
Thus \eqref{eq:gram}, with $F=R$ and $I=I_R$, gives
\[
 \left|\E R^2-I_R\right|
 \ll\frac{k\log D}{z}\,I_R.
\]
Since $I_R\le A^k$, it follows that
\[
 \E R^2
 =
 I_R+O\!\left(\frac{k\log D}{z}A^k\right).
\]

We have already proved that
\[
 I_R
 =
 A^k+
 O\!\left(
 A^k\left(e^{-u/2}+\frac{k^2}{z}\right)
 \right).
\]
Substituting this estimate into the preceding identity
and combining the error terms, we obtain
\[
 \begin{aligned}
 \E R^2
 &=
 A^k+
 O\!\left(
 A^k\left(
 e^{-u/2}+\frac{k^2}{z}+\frac{k\log D}{z}
 \right)
 \right)\\
 &=
 A^k\left(
 1+O\!\left(
 e^{-u/2}+\frac{k^2}{z}+\frac{k\log D}{z}
 \right)
 \right).
 \end{aligned}
\]
Finally, $T=\lfloor e^x\rfloor\ge e^x/2$ for large $x$, so
\[
 \frac{D^3}{T}\le2e^{3x/4-x}=2e^{-x/4}.
\]
Since $A^k\ge1$, the absolute error in~\eqref{eq:average-error}
is also a relative error of size at most $2e^{-x/4}$ after
division by $A^k$. Therefore
\[
 \frac{S_1}{A^k}
 =1+O\!\left(e^{-u/2}+\frac{k^2}{z}
                  +\frac{k\log D}{z}+e^{-x/4}\right)
 =1+o(1).
\]
All terms tend to zero uniformly for $k\le\eta x$. In particular,
\begin{equation}\label{eq:S1}
 S_1=(1+o(1))A^k>0
\end{equation}
for sufficiently large $x$.
\end{proof}

\subsection{Controlling uncovered forms}

\begin{lemma}[The contribution of uncovered forms]\label{lem:uncovered-forms}
For $S_2$ defined by \eqref{S_DEF}, we have
\[
 S_2\ll kA^k\left(e^{-u}+\frac{(\log D)^2}{z^2}\right)
           +\frac{kD^3}{T}.
\]
The estimate is uniform in the admissible shifts and in $b$.
\end{lemma}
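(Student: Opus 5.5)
For each $i$ we replace $R$ on the set of $t$ where $L_i(t)$ is uncovered by a majorant that no longer involves the $i$th form. We then show that the cancellation $\sum_{d\mid P_*}a(d)/\varphi(d)=0$ from \eqref{eq:basic-coefficients} makes this majorant small in mean square.

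\emph{Step 1 (the majorant).} Fix $i$. If $(L_i(t),P_*)=1$, then every indicator $\ind_{d_i\mid L_i(t)}$ with $d_i>1$ vanishes. So in the expansion \eqref{R_VIEW} only the tuples with $d_i=1$ survive. Put
\[
 \mathcal J_i=\{\bm d\in\mathcal D_k:d_i=1\},\qquad G_i=R_{\mathcal J_i}.
\]
Then $R(t)\ind_{(L_i(t),P_*)=1}=G_i(t)\ind_{(L_i(t),P_*)=1}$. Since $G_i^2\ge0$, this gives $w(t)\ind_{(L_i(t),P_*)=1}\le G_i(t)^2$, and hence
\[
 S_2\le\sum_{i=1}^k\frac1T\sum_{t=1}^T G_i(t)^2 .
\]
By \eqref{R_J}, each inner average differs from $\E G_i^2$ by at most $D^3/T$. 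This accounts for the term $kD^3/T$. It remains to show $\E G_i^2\ll A^k\bigl(e^{-u}+(\log D)^2/z^2\bigr)$.

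\emph{Step 2 (rewriting $G_i$).} Summing back over the $r_i$ for which $d_i=1$ means replacing $\psi_{i,r_i}$ by its value $1/\varphi(r_i)$ on uncovered points. Thus
\[
 G_i(t)=\sum_{\bm r'}\Bigl(\prod_{\ell\ne i}a(r_\ell)\Bigr)E(m)\prod_{\ell\ne i}\psi_{\ell,r_\ell}(t),
 \qquad
 E(m)=\sum_{\substack{r\mid P_*,\ (r,m)=1\\ r\le D/m}}\frac{a(r)}{\varphi(r)}.
\]
Here $\bm r'=(r_\ell)_{\ell\ne i}$ runs over pairwise coprime tuples with $m=\prod_{\ell\ne i}r_\ell\le D$. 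After relabelling the $k-1$ remaining forms, this is a function of the shape considered in Lemma~\ref{lem:gram-estimate} with $j=k-1$. Since $k\log D/z=o(1)$, that lemma gives
\[
 \E G_i^2\ll\sum_{\bm r'}\prod_{\ell\ne i}\frac{a(r_\ell)^2}{\varphi(r_\ell)}\,E(m)^2 .
\]

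\emph{Step 3 (bounding $E(m)$ by cancellation).} By $\sum_{d\mid P_*}a(d)/\varphi(d)=0$, we have $E(m)=-\sum a(r)/\varphi(r)$, where the sum runs over $r\mid P_*$ with $(r,m)>1$ or $r>D/m$.

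The first part is at most
\[
 \sum_{p\mid m}\sum_{p\mid r}\frac{|a(r)|}{\varphi(r)}\ll\sum_{p\mid m}\frac1p\le\frac{\omega(m)}z\le\frac{\log D}{z\log z},
\]
using \eqref{eq:prime-coefficients} and \eqref{w(n).calc}.

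For the second part we use Rankin's trick with the exponent $\beta$:
\[
 \sum_{r>D/m}\frac{|a(r)|}{\varphi(r)}\le\Bigl(\frac mD\Bigr)^{\beta}\sum_{r}\frac{|a(r)|r^\beta}{\varphi(r)}\ll\Bigl(\frac mD\Bigr)^{\beta},
\]
by \eqref{eq:tilt}; the $r=1$ term contributes $1$, so the sum is $\le1+C$. Hence $E(m)^2\ll(\log D)^2/z^2+(m/D)^{2\beta}$.

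\emph{Step 4 (summing over tuples).} For the first piece, dropping the constraints on the tuple bounds the sum by $A^{k-1}(\log D)^2/z^2$. For the second piece, dropping the constraints gives
\[
 D^{-2\beta}A_{2\beta}^{k-1}\le A^{k}\exp(-2u+2C_1k\beta)\le A^ke^{-u},
\]
because $C_1k\beta\le u/2$ by the choice of $\eta$ made in Lemma~\ref{lem:total-weight}. Here we use $A_\gamma\le A+C_1\gamma$ at $\gamma=2\beta$, which is exactly the range allowed in \eqref{eq:tilt}. Summing over $i$ gives the claim, uniformly in $b$ and in the shifts.

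\emph{Main obstacle.} The delicate point is Steps 1--2. One must check that the partial sum $R_{\mathcal J_i}$ (the one to which \eqref{R_J} applies) coincides with the $\psi$-expansion of Step 2, to which Lemma~\ref{lem:gram-estimate} applies. I would verify this directly from the formula for $\lambda_{\bm d}$: setting $d_i=1$ leaves the factor $a(r_i)/\varphi(r_i)$ summed over the admissible $r_i$, and that sum is precisely $E(m)$. The remaining estimates are routine applications of Lemma~\ref{lem:coefficient-bounds}.
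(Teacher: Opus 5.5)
Your proposal is correct and follows the paper's proof essentially step by step. Your $G_i$ with coefficient $E(m)$ is the paper's collapsed weight $R_k$ with $\varepsilon(v)$, and you use the same ingredients in the same way: \eqref{R_J} to pass from $[1,T]$ to the complete average, Lemma~\ref{lem:gram-estimate} with $j=k-1$, the cancellation $\sum_{d\mid P_*}a(d)/\varphi(d)=0$ together with Rankin's trick and \eqref{eq:prime-coefficients} to bound the truncated sum, and $2C_1k\beta\le u$ to get $D^{-2\beta}A_{2\beta}^{k-1}\le A^ke^{-u}$. The only difference is the order of presentation: you start from the partial sum over $d_i=1$ and rewrite it in $\psi$-form, while the paper defines $R_k$ in $\psi$-form and then identifies it with that partial sum; your direct check via the formula for $\lambda_{\bm d}$ is a valid way to make this identification.
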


\begin{proof}
We first estimate the contribution of $L_k$; the same argument
will apply to each other form after relabelling. For $v\mid P_*$
with $v\le D$, we define
\[
 \varepsilon(v)=
   \sum_{\substack{d\mid P_*\\d\le D/v\\(d,v)=1}}
                    \frac{a(d)}{\varphi(d)},
\]
and put
\[
 R_k(t)=\sum_{\bm r\in\mathcal D_{k-1}}
       \varepsilon(r_1\cdots r_{k-1})
       \prod_{i=1}^{k-1}a(r_i)\psi_{i,r_i}(t).
\]
To explain this definition, fix the first $k-1$ coordinates of
a tuple in $\mathcal D_k$ and write $v=r_1\cdots r_{k-1}$.
The admissible last coordinates are exactly those divisors
$d\mid P_*$ satisfying $(d,v)=1$ and $d\le D/v$. Grouping the
terms of $R$ according to their first $k-1$ coordinates gives
the exact identity
\[
 R(t)=\sum_{\bm r\in\mathcal D_{k-1}}
       \left(\prod_{i=1}^{k-1}a(r_i)\psi_{i,r_i}(t)\right)
       \sum_{\substack{d\mid P_*\\d\le D/v\\(d,v)=1}}
                         a(d)\psi_{k,d}(t),
 \qquad v=r_1\cdots r_{k-1}.
\]
If $(L_k(t),P_*)=1$, then $\ind_{p\mid L_k(t)}=0$ for every
$p\mid P_*$, and consequently
\[
 \psi_{k,d}(t)
 =\prod_{p\mid d}\frac1{p-1}
 =\frac1{\varphi(d)}\qquad(d\mid P_*).
\]
The inner sum is then $\varepsilon(v)$, so $R(t)=R_k(t)$.
Since $w(t) = R(t)^2$, we obtain
\begin{equation}\label{eq:uncovered}
 w(t)\ind_{(L_k(t),P_*)=1}\le R_k(t)^2.
\end{equation}
Indeed, when $(L_k (t), P_{*})=1$, both sides are equal, and when $(L_{k}(t), P_{*})>1$ the left-hand side is zero.
When $k=1$, the set $\mathcal D_0$ contains only the empty tuple,
and these formulas mean $v=1$ and $R_1(t)=\varepsilon(1)$.

We also need to compare the two averages of $R_k^2$. Algebraically,
$R_k$ is obtained from $R$ by setting every prime divisibility
indicator for the $k$th form equal to zero. Indeed, this replaces
$\psi_{k,d}(t)$ by $1/\varphi(d)$ in the product expansion.
In the expansion with coefficients $\lambda_{\bm d}$, this
operation removes exactly the terms with $d_k>1$, and leaves
those with $d_k=1$. Hence (see \eqref{R_VIEW})
\[
 R_k(t)=\sum_{(d_1,\ldots,d_{k-1})\in\mathcal D_{k-1}}
       \lambda_{(d_1,\ldots,d_{k-1},1)}
       \prod_{i=1}^{k-1}\ind_{d_i\mid L_i(t)}.
\]
Since $(d_1,\ldots, d_{k-1}, 1)\in \mathcal D_k$ for every $(d_1,\ldots, d_{k-1})\in \mathcal D_{k-1}$, it follows from \eqref{R_J} that
\begin{equation}\label{eq:collapsed-average}
 \left|\frac1T\sum_{t=1}^T R_k(t)^2-\E R_k^2\right|
 \le\frac1T\left(\sum_{\bm d\in\mathcal D_{k-1}}
                     |\lambda_{(\bm d,1)}|\right)^2
 \le\frac{D^3}{T}.
\end{equation}

\medskip\noindent\textit{Estimating the remaining coefficient sum.}
The full sum $\sum_{d\mid P_*}a(d)/\varphi(d)$ is zero by
\eqref{eq:basic-coefficients}. Since $v\le D$, the divisor $d=1$
occurs in the sum defining $\varepsilon(v)$. Every omitted divisor
is therefore greater than $1$, and its coefficient $a(d)$ is
negative. Subtracting the omitted terms from the zero full sum
gives the exact identity
\[
 \varepsilon(v)
 =-\sum_{\substack{d\mid P_*\\d>D/v\ \text{or }(d,v)>1}}
                 \frac{a(d)}{\varphi(d)}
 =\sum_{\substack{d\mid P_*\\d>D/v\ \text{or }(d,v)>1}}
                 \frac{|a(d)|}{\varphi(d)}\ge0.
\]
The condition $(d,v)>1$ means that some prime dividing $v$
also divides $d$. Taking the sum over these two possible reasons
for exclusion, with any multiple counting increasing the bound,
we obtain
\[
 \varepsilon(v)
 \le\sum_{\substack{d\mid P_*\\d>D/v}}
                    \frac{|a(d)|}{\varphi(d)}
    +\sum_{p\mid v}\sum_{\substack{d\mid P_*\\p\mid d}}
                    \frac{|a(d)|}{\varphi(d)}.
\]
For the first sum, $d>D/v$ implies $1\le(vd/D)^\beta$, so
\[
 \begin{aligned}
 \sum_{\substack{d\mid P_*\\d>D/v}}
          \frac{|a(d)|}{\varphi(d)}
 &\le\left(\frac vD\right)^\beta
       \sum_{\substack{d\mid P_*\\d>D/v}}
          \frac{|a(d)|d^\beta}{\varphi(d)}\\
 &\le\left(\frac vD\right)^\beta
       \sum_{\substack{d\mid P_*\\d>1}}
          \frac{|a(d)|d^\beta}{\varphi(d)}
 \ll\left(\frac vD\right)^\beta,
 \end{aligned}
\]
by~\eqref{eq:tilt}. For the second sum, use
\eqref{eq:prime-coefficients}, $p>z$, and
$\omega(v)\le\log v/\log z$ to get
\[
 \begin{aligned}
 \sum_{p\mid v}\sum_{\substack{d\mid P_*\\p\mid d}}
          \frac{|a(d)|}{\varphi(d)}
 &\ll\sum_{p\mid v}\frac1p
 \le\frac{\omega(v)}z\\
 &\le\frac{\log v}{z\log z}
 \le\frac{\log D}{z\log z}
 \le\frac{\log D}{z}.
 \end{aligned}
\]
We note that for $v=1$ both prime sums are empty and these inequalities still
hold. Combining the estimates proves
\[
 0\le\varepsilon(v)
 \ll(v/D)^\beta+\frac{\log D}{z}.
\]
Squaring and applying $(a+b)^2\le2a^2+2b^2$, we also have
\begin{equation}\label{est:eps.2}
 \varepsilon(v)^2
 \ll D^{-2\beta}v^{2\beta}
      +\frac{(\log D)^2}{z^2}.
\end{equation}

\medskip\noindent\textit{The second moment of the collapsed weight.}
The diagonal sum for $R_k$ is
\[
 I_k=\sum_{\bm r\in\mathcal D_{k-1}}
       \varepsilon(v)^2
       \prod_{i=1}^{k-1}\frac{a(r_i)^2}{\varphi(r_i)},
 \qquad v=r_1\cdots r_{k-1}.
\]
For every tuple $\bm r\in\mathcal D_{k-1}$, the product
$v=r_1\cdots r_{k-1}$ satisfies $v\mid P_*$ and $v\le D$.
By \eqref{est:eps.2}, we therefore have
\[
 \varepsilon(v)^2
 \le C\left(
 D^{-2\beta}v^{2\beta}
 +\frac{(\log D)^2}{z^2}
 \right)
\]
with an absolute constant $C>0$, uniformly over these
tuples. Moreover,
\[
 v^{2\beta}
 =(r_1\cdots r_{k-1})^{2\beta}
 =\prod_{i=1}^{k-1}r_i^{2\beta}.
\]
Multiplying the bound for $\varepsilon(v)^2$ by the
nonnegative weight
$\prod_{i=1}^{k-1}a(r_i)^2/\varphi(r_i)$
and summing over $\mathcal D_{k-1}$, we obtain
\[
 \begin{aligned}
 I_k
 &\le
 CD^{-2\beta}
 \sum_{\bm r\in\mathcal D_{k-1}}
       \prod_{i=1}^{k-1}
       \frac{a(r_i)^2r_i^{2\beta}}{\varphi(r_i)}
 \\
 &\quad+
 C\frac{(\log D)^2}{z^2}
 \sum_{\bm r\in\mathcal D_{k-1}}
       \prod_{i=1}^{k-1}
       \frac{a(r_i)^2}{\varphi(r_i)}.
 \end{aligned}
\]

We now enlarge the domain of each sum. Membership in
$\mathcal D_{k-1}$ imposes the conditions
\[
 r_i\mid P_*,
 \qquad
 (r_i,r_j)=1\quad(i\ne j),
 \qquad
 r_1\cdots r_{k-1}\le D.
\]
We retain only the conditions $r_i\mid P_*$ and remove
the coprimality and product restrictions.
Since every summand is nonnegative, this can only
increase the sums.

For $\gamma\in\{0,2\beta\}$, it follows that
\[
 \begin{aligned}
 &\sum_{\bm r\in\mathcal D_{k-1}}
       \prod_{i=1}^{k-1}
       \frac{a(r_i)^2r_i^\gamma}{\varphi(r_i)}
 \\
 &\quad\le
 \sum_{r_1\mid P_*}\cdots\sum_{r_{k-1}\mid P_*}
       \prod_{i=1}^{k-1}
       \frac{a(r_i)^2r_i^\gamma}{\varphi(r_i)}
 \\
 &\quad=
 \prod_{i=1}^{k-1}
 \left(
 \sum_{r_i\mid P_*}
       \frac{a(r_i)^2r_i^\gamma}{\varphi(r_i)}
 \right)
 =A_\gamma^{k-1},
 \end{aligned}
\]
by the definition of $A_\gamma$.

Taking $\gamma=2\beta$ in the first sum and $\gamma=0$
in the second, and recalling that $A_0=A$, we conclude
that
\[
 I_k
 \ll
 D^{-2\beta}A_{2\beta}^{k-1}
 +\frac{(\log D)^2}{z^2}A^{k-1}.
\]

For completeness, let us consider the case $k=1$ separately.
Then $\mathcal D_{k-1}=\mathcal D_0$ contains exactly one
element, namely the empty tuple. Its coordinate product
is $v=1$, and the empty product of weights also equals $1$.
Thus the sum defining $I_1$ has a single term, and
\[
 I_1=\varepsilon(1)^2.
\]
The estimate \eqref{est:eps.2} is valid at $v=1$, since $1\mid P_*$ and $1\le D$.
Substituting $v=1$ therefore gives
\[
 I_1=\varepsilon(1)^2
 \ll D^{-2\beta}+\frac{(\log D)^2}{z^2}.
\]
This is precisely the claimed bound for $I_k$ when
$k=1$, because
\[
 A_{2\beta}^{k-1}=A_{2\beta}^{0}=1,
 \qquad
 A^{k-1}=A^0=1.
\]
Hence the estimate for $I_k$ holds also in this case.

Applying~\eqref{eq:gram} with $j=k-1$ to the coefficients
$\varepsilon(v)\prod_{i<k}a(r_i)$ yields
\[
 \E R_k^2
 \le\left(1+C_2\frac{k\log D}{z}\right)I_k
 \ll D^{-2\beta}A_{2\beta}^{k-1}
          +\frac{(\log D)^2}{z^2}A^{k-1}.
\]
Here $C_2$ is an absolute constant, and the multiplier is bounded
because $k\log D/z=o(1)$.

We now estimate the first term.
Taking $\gamma=2\beta$ in \eqref{eq:tilt}, we obtain
\[
 A_{2\beta}\le A+2C_1\beta.
\]
Since $k\ge1$ and $A\ge1$, it follows that
\[
 \begin{aligned}
 A_{2\beta}^{k-1}
 &\le (A+2C_1\beta)^{k-1}\\
 &=A^{k-1}
   \left(1+\frac{2C_1\beta}{A}\right)^{k-1}\\
 &\le A^{k-1}
   \exp\!\left(\frac{2C_1(k-1)\beta}{A}\right)\\
 &\le A^{k-1}
   \exp\!\left(2C_1(k-1)\beta\right).
 \end{aligned}
\]

By the definition $u=\beta\log D$, we have
\[
 D^{-2\beta}
 =\exp(-2\beta\log D)
 =e^{-2u}.
\]
Multiplying the preceding estimate by this factor gives
\[
 \begin{aligned}
 D^{-2\beta}A_{2\beta}^{k-1}
 &\le
 A^{k-1}\exp\!\left(-2u+2C_1(k-1)\beta\right)\\
 &\le
 A^k\exp\!\left(-2u+2C_1k\beta\right).
 \end{aligned}
\]

It remains to control the positive term in the exponent.
The constant $\eta$ was chosen so that
$\eta\le1/(8C_1)$. Since $k\le\eta x$ and
$\log D=x/4$, we have
\[
 2C_1k
 \le2C_1\eta x
 \le\frac{x}{4}
 =\log D.
\]
Multiplying by $\beta>0$ yields
\[
 2C_1k\beta
 \le\beta\log D
 =u.
\]
Therefore
\[
 -2u+2C_1k\beta\le-u,
\]
and consequently
\[
 D^{-2\beta}A_{2\beta}^{k-1}
 \le A^k e^{-u}.
\]
The second term is at most $(\log D)^2A^k/z^2$, again because
$A\ge1$. We have therefore proved
\[
 \E R_k^2
 \ll A^k\left(e^{-u}+\frac{(\log D)^2}{z^2}\right).
\]

\medskip\noindent\textit{Bounding the number of uncovered forms.}
Combining~\eqref{eq:uncovered} and~\eqref{eq:collapsed-average},
we find
\[
 \frac1T\sum_{t=1}^T
       w(t)\ind_{(L_k(t),P_*)=1}
 \le\E R_k^2+\frac{D^3}{T}
 \ll A^k\left(e^{-u}+\frac{(\log D)^2}{z^2}\right)
       +\frac{D^3}{T}.
\]
The tuple set, the coefficient bounds, and the local moment
estimates are invariant under a permutation of the forms.
Repeating the same argument with each $L_i$ as the last form
and adding the $k$ estimates gives
\[
 \begin{aligned}
 S_2
 &=\sum_{i=1}^k\frac1T\sum_{t=1}^T
                    w(t)\ind_{(L_i(t),P_*)=1}\\
 &\ll kA^k\left(e^{-u}+\frac{(\log D)^2}{z^2}\right)
           +\frac{kD^3}{T}.
 \end{aligned}
\]
\end{proof}

\begin{proof}[Completion of the proof of Lemma~\ref{lem:translate}]
By Lemma~\ref{lem:total-weight}, $S_1\ge A^k/2$ for sufficiently large $x$.
Using Lemma~\ref{lem:uncovered-forms} and dividing by this lower
bound, together with $A^k\ge1$ and the parameter
choices in~\eqref{eq:parameters}, which give
$u=(\log x)^5/4$, $\log D=x/4$, $z=x^6$, and
$D^3/T\le2e^{-x/4}$, we obtain
\[
 \begin{aligned}
 \frac{S_2}{S_1}
 &\ll k e^{-u}+\frac{k(\log D)^2}{z^2}
                    +\frac{kD^3}{TA^k}\\
 &\ll k e^{-(\log x)^5/4}
          +\frac{kx^2}{x^{12}}+k e^{-x/4}\\
 &\ll x e^{-(\log x)^5/4}+x^{-9}+x e^{-x/4}
 =o(1).
 \end{aligned}
\]
Thus $S_2<S_1$ for sufficiently large $x$.

If every $1\le t\le T$ had an uncovered form, its number of
uncovered forms would be at least $1$, and the nonnegative
weights would give $S_2\ge S_1$, a contradiction. Hence some
$t\in\{1,\ldots,T\}$ satisfies $(L_i(t),P_*)>1$ for every $i$.
Each such greatest common divisor has a prime factor, and every
prime factor of $P_*$ lies in $(z,y]$. Since $T\le e^x$, this is
exactly the divisibility statement of the lemma. Lemma \ref{lem:translate} is proved.
\end{proof}

\section{Proof of the block theorem}\label{sec:block-proof}

\begin{proof}[Proof of Theorem~\ref{thm:block}]
Let $\eta$ be the constant in Lemma~\ref{lem:translate}.
By decreasing it if necessary, we may assume that $0<\eta\le1$.
Fix an absolute constant $c_*>0$ such that
\begin{equation}\label{eq:c-choice}
 0<c_*\le1,\qquad 3e^{-\gamma_E}c_*<\frac\eta2.
\end{equation}
We take $X$ sufficiently large in terms of these absolute constants.
All lower bounds on $X$ imposed below will be independent of $\cB$.

\medskip\noindent\textit{The preliminary sieve.}
Put
\begin{equation}\label{eq:block-parameters}
 x=\frac13\log X,\qquad Q=\prod_{p\le x}p,\qquad
 M=B(X),\qquad F(X)=\log X\log\log X.
\end{equation}
In particular, $X=e^{3x}$. Define $H$ by \eqref{eq:H}.
We may assume that $H\ge1$, since otherwise there is nothing to prove.
Then
\[
 1\le H\le\frac{c_*F(X)}{M+1}.
\]
Consequently,
\begin{equation}\label{eq:size-bounds}
 M+1\le c_*F(X),\qquad
 H\le c_*F(X),\qquad
 HM\le H(M+1)\le c_*F(X).
\end{equation}

Only elements $a\in\cB\cap[0,X]$ can occur in a representation
$p+a$ of an integer at most $X$. Consider the set of shifts
\begin{equation}\label{eq:initial-shifts}
 S_0=\{j-a:1\le j\le H,\ a\in\cB\cap[0,X]\}.
\end{equation}
There are $HM$ pairs $(j,a)$ in \eqref{eq:initial-shifts}, so
\[
 |S_0|\le HM.
\]
Repeated shifts are included only once, as required by
Lemma~\ref{lem:translate}. When $M=0$, the set $S_0$ is empty and
the sums below vanish.

For each residue class $r\bmod Q$, let
\[
 K(r)=\#\{s\in S_0:(r+s,Q)=1\}.
\]
For each fixed $s$, exactly $\varphi(Q)$ residue classes $r\bmod Q$
satisfy $(r+s,Q)=1$, since translation by $s$ permutes the classes.
Thus
\begin{equation}\label{eq:average}
 \begin{aligned}
 \frac1Q\sum_{r\bmod Q}K(r)
 &=\frac1Q\sum_{r\bmod Q}\sum_{s\in S_0}
       \ind_{(r+s,Q)=1}\\
 &=\frac1Q\sum_{s\in S_0}\sum_{r\bmod Q}
       \ind_{(r+s,Q)=1}\\
 &=|S_0|\frac{\varphi(Q)}Q.
 \end{aligned}
\end{equation}
Choose $r\in\{0,\ldots,Q-1\}$ for which $K(r)$ does not exceed
this average, and put
\[
 S=\{s\in S_0:(r+s,Q)=1\}.
\]
It follows from \eqref{eq:average} and \eqref{eq:size-bounds} that
\begin{equation}\label{eq:survivors}
 |S|\le |S_0|\frac{\varphi(Q)}Q
 \le HM\frac{\varphi(Q)}Q
 \le c_*F(X)\frac{\varphi(Q)}Q.
\end{equation}

For every $s\in S_0\setminus S$, some prime $p\mid Q$ divides
$r+s$. It follows that $p\mid N+s$ whenever $N\equiv r\pmod Q$,
with $p\le x$. We shall use Lemma~\ref{lem:translate} to cover the
remaining shifts in $S$.

By Mertens' product formula,
\[
 \frac{\varphi(Q)}Q
 =\prod_{p\le x}\left(1-\frac1p\right)
 =\frac{e^{-\gamma_E}+o(1)}{\log x}.
\]
By \eqref{eq:block-parameters}, we have
\[
 F(X)=3x\log(3x)=3x(\log x+\log3).
\]
Substituting these two expressions into \eqref{eq:survivors},
we find
\[
 \begin{aligned}
 |S|
 &\le 3c_*x(\log x+\log3)
       \frac{e^{-\gamma_E}+o(1)}{\log x}\\
 &=\bigl(3c_*e^{-\gamma_E}+o(1)\bigr)x.
 \end{aligned}
\]
Our choice \eqref{eq:c-choice} therefore ensures that
\begin{equation}\label{eq:shift-count}
 |S|\le\eta x
\end{equation}
for all sufficiently large $X$, uniformly in $\cB$, since
\eqref{eq:size-bounds} has removed the dependence on $M$.

If $s=j-a\in S_0$, then $1\le j\le H$ and $0\le a\le X$,
so $1-X\le s\le H$. Moreover,
\[
 \frac{H}{X}\le c_*\frac{\log X\log\log X}{X}\longrightarrow0
\]
by \eqref{eq:size-bounds}. We may consequently assume that $H\le X$,
which yields
\begin{equation}\label{eq:shift-size}
 |s|\le X=e^{3x}\qquad(s\in S_0).
\end{equation}
If $S\ne\varnothing$, the distinct shifts in $S$ therefore satisfy
the hypotheses of Lemma~\ref{lem:translate}, by
\eqref{eq:shift-count} and \eqref{eq:shift-size}.

\medskip\noindent\textit{An interval disjoint from $\cB$.}
List the elements of $\cB$ strictly between $X/2$ and $X$ in increasing
order as $a_1<\cdots<a_q$, where $0\le q\le M$.
Set $a_0=X/2$ and $a_{q+1}=X$. The $q+1$ open intervals
$(a_i,a_{i+1})$, for $0\le i\le q$, contain no element of $\cB$,
and their lengths satisfy
\[
 \sum_{i=0}^{q}(a_{i+1}-a_i)=X/2.
\]
At least one of these lengths is at least $X/(2(q+1))$.
Choose one such interval and denote it by $(u,v)$. Then
\begin{equation}\label{eq:empty-gap}
 X/2\le u<v\le X,\qquad
 (u,v)\cap\cB=\varnothing,\qquad
 L:=v-u\ge\frac{X}{2(M+1)}.
\end{equation}
If $q=0$, we take $(u,v)=(X/2,X)$. Either endpoint may belong to $\cB$.

Combining \eqref{eq:empty-gap} with the first inequality in
\eqref{eq:size-bounds}, we obtain the uniform estimate
\begin{equation}\label{eq:L-lower}
 L\ge\frac{X}{2c_*F(X)}.
\end{equation}
We next compare \eqref{eq:L-lower} with the range of translates
in Lemma~\ref{lem:translate}. Put
\begin{equation}\label{eq:y}
 y=\exp\!\left(\frac{x}{(\log x)^5}\right).
\end{equation}
The prime number theorem, in its equivalent form
$\sum_{p\le x}\log p=x+o(x)$, gives
\[
 \log Q=x+o(x).
\]
Since $\log X=3x$, this implies
\begin{equation}\label{eq:translation-scale}
 Qe^x=\exp(2x+o(x))=X^{2/3+o(1)}.
\end{equation}
Also, the definition \eqref{eq:y} gives
\[
 \frac{\log y}{\log X}=\frac{1}{3(\log x)^5}\longrightarrow0,
 \qquad
 \frac{\log y}{\log x}=\frac{x}{(\log x)^6}\longrightarrow\infty.
\]
In particular, $y=X^{o(1)}$ and $y>x$ for all sufficiently large $X$.

Using \eqref{eq:L-lower}, \eqref{eq:translation-scale}, and
$Q\le Qe^x$, we have
\[
 \begin{aligned}
 \frac{Q+Qe^x+H}{L}
 &\le\frac{2c_*F(X)}X\bigl(2Qe^x+c_*F(X)\bigr)\\
 &\ll F(X)X^{-1/3+o(1)}+\frac{F(X)^2}{X}
 \longrightarrow0.
 \end{aligned}
\]
Similarly,
\[
 \frac yL\le 2c_*F(X)X^{-1+o(1)}\longrightarrow0.
\]
Here we have used $F(X)=\log X\log\log X=X^{o(1)}$.
Consequently, for all sufficiently large $X$,
\begin{equation}\label{eq:room}
 Q+Qe^x+H<\frac L3,\qquad x<y<\frac L3.
\end{equation}

\medskip\noindent\textit{The final translate.}
Choose
\[
 b=r+Q\left\lceil\frac{u+L/3-r}{Q}\right\rceil.
\]
Since $\xi \le \lceil \xi \rceil <\xi+1$ for every real $\xi$, we obtain
\begin{equation}\label{eq:baseline}
 b\equiv r\pmod Q,\qquad
 u+\frac L3\le b<u+\frac L3+Q.
\end{equation}
If $S=\varnothing$, we take $t=1$.

If $S\ne\varnothing$, then by \eqref{eq:shift-count} and
\eqref{eq:shift-size}, we may apply Lemma~\ref{lem:translate}
to $S$ and this choice of $b$, for sufficiently large $X$.
Let $1\le t\le e^x$ be the integer given by the lemma.

In either case, set
\[
 N=b+Qt.
\]

For $1\le j\le H$, we have $N+j>b\ge u+L/3$ by
\eqref{eq:baseline}. On the other hand, \eqref{eq:room} gives
\[
 N+j\le b+Qe^x+H
 <u+\frac L3+Q+Qe^x+H
 <u+\frac{2L}3.
\]
We have proved
\begin{equation}\label{eq:block-location}
 u+\frac L3<N+j<u+\frac{2L}3<v
 \qquad(1\le j\le H).
\end{equation}
Thus the block lies in $[X/2,X]$, by \eqref{eq:empty-gap}, and
\eqref{eq:block-location} places it more than $L/3$ from each endpoint
of $(u,v)$.

Fix $j\in\{1,\ldots,H\}$ and $a\in\cB$.
By \eqref{eq:empty-gap}, either $a\le u$ or $a\ge v$.
If $a\ge v$, then \eqref{eq:block-location} gives
\[
 N+j-a\le N+j-v<0.
\]
This rules out a prime difference whenever $a\ge v$, including $a>X$.

It remains to consider $a\le u$. Since $\cB\subseteq\cZ_{\ge0}$
and $u\le X$, we have $a\in\cB\cap[0,X]$, and hence
$s=j-a$ belongs to $S_0$ by \eqref{eq:initial-shifts}.
If $s\in S_0\setminus S$, some prime $p\mid Q$ divides $r+s$.
The congruence $N\equiv r\pmod Q$ implies $p\mid N+s$, and
$p\le x<y$ by \eqref{eq:room}.
If $s\in S$, Lemma \ref{lem:translate}
supplies a prime $p\le y$ dividing the same integer $N+s$.
Thus in both cases there is a prime $p\le y$ with $p\mid N+j-a$.

On the other hand, $a\le u$, \eqref{eq:block-location}, and
\eqref{eq:room} imply
\[
 N+j-a\ge N+j-u>\frac L3>y\ge p.
\]
Hence $N+j-a$ is composite. We conclude that $N+j\notin\PP+\cB$
for every $1\le j\le H$.

The lower bounds on $X$ used in the proof depend only on the
absolute constants in Lemma~\ref{lem:translate} and on estimates uniform
in $\cB$. They can therefore be satisfied by one absolute choice
of $X_*$. This completes the proof of Theorem \ref{thm:block}.

\end{proof}

\section{Additive complements and Romanoff numbers}\label{sec:complements}

\begin{proof}[Proof of Theorem~\ref{cor:complement}]
Let $\cB$ be an asymptotic additive complement, and choose $n_0\ge1$
such that every integer $n\ge n_0$ belongs to $\PP+\cB$.
With $c_*$ and $X_*$ as in Theorem~\ref{thm:block}, take
\[
 X\ge\max\{X_*,2n_0\}.
\]
Suppose, for a contradiction, that
\[
 B(X)+1\le c_*\log X\log\log X.
\]
Then \eqref{eq:H} gives $H\ge1$, and Theorem~\ref{thm:block}
supplies an integer
\[
 n\in[X/2,X]\setminus(\PP+\cB).
\]
But $n\ge X/2\ge n_0$, contradicting the choice of $n_0$.
It follows that
\begin{equation}\label{eq:complement-strict}
 B(X)+1>c_*\log X\log\log X
 \qquad\bigl(X\ge\max\{X_*,2n_0\}\bigr).
\end{equation}

For $X$ large enough that $c_*\log X\log\log X\ge2$,
\eqref{eq:complement-strict} yields
\[
 \begin{aligned}
 B(X)&>c_*\log X\log\log X-1\\
 &\ge\frac{c_*}{2}\log X\log\log X.
 \end{aligned}
\]
This proves \eqref{eq:lower} with the absolute constant $c=c_*/2$.
Dividing by $\log X>0$ gives
\[
 \frac{B(X)}{\log X}\ge c\log\log X\longrightarrow\infty,
\]
which is \eqref{eq:ruzsa}.
\end{proof}

\begin{proof}[Proof of Theorem~\ref{cor:base}]
Fix an integer $a\ge2$ and put
\[
 \cB=\{a^n:n\in\cN\}.
\]
Then $\PP+\cB=\cR_a$. For all sufficiently large $X$, we have
\[
 B(X)+1
 =\left\lfloor\frac{\log X}{\log a}\right\rfloor+1
 \le\frac{2\log X}{\log a}.
\]
Consequently, the length in Theorem~\ref{thm:block} satisfies
\[
 H=\left\lfloor c_*\frac{\log X\log\log X}{B(X)+1}\right\rfloor
 \ge\left\lfloor\frac{c_*\log a}{2}\log\log X\right\rfloor.
\]
For sufficiently large $X$, this length is positive, so
Theorem~\ref{thm:block} supplies a block of $H$ consecutive integers
in $[X/2,X]$ outside $\cR_a$. Hence
\[
 G_{\cR_a}(X)\ge H\gg_a\log\log X.
\]
This proves the theorem.
\end{proof}

\section*{Acknowledgements}
This work is an output of a research project (HSE-BR-2025-84) implemented as part of the Basic Research Program at HSE University.

\end{document}